\DeclareMathAlphabet{\mathpzc}{OT1}{pzc}{m}{it}
\newtheorem{theorem}{Theorem}[section]
\newtheorem{definition}[theorem]{Definition}
\newtheorem{proposition}[theorem]{Proposition}
\newtheorem{remark}[theorem]{Remark}
\newtheorem{noname}[theorem]{}
\newtheorem{observation}[theorem]{Observation}
\newtheorem{lemma-conjecture}[theorem]{Lemma--Conjecture}
\newtheorem{corollary}[theorem]{Corollary}
\numberwithin{equation}{theorem}
\renewcommand{\mathcal}{\mathscr}
\newcommand{\Cal}{\mathcal}
\newcommand{\SE}{{\mathcal{E}}}
\newcommand{\SI}{{\mathcal{I}}}
\newcommand{\SO}{{\mathcal{O}}}
\newcommand{\SY}{{\mathcal{Y}}}
\renewcommand{\mathbb}{\mathbf}
\newcommand{\WU}{\widetilde{U}}
\newcommand{\WY}{\widetilde{Y}}
\title[Smoothable multiple
structures on curves]{Smoothable locally Cohen--Macaulay and non Cohen--Macaulay
multiple structures on curves}
\author{Francisco Javier Gallego}
\author{Miguel Gonz\'alez}
\author{\\ Bangere P. Purnaprajna}
\address{Departamento de \'Algebra and Instituto de Matem\'atica
Interdisciplinar, Universidad Complutense de Madrid}
\email{gallego@mat.ucm.es}
\address{Departamento de \'Algebra, Universidad Complutense de Madrid}
\email{mgonza@mat.ucm.es}
\address{Department of Mathematics, University of Kansas}
\email{purna@math.ku.edu}
\thanks{\emph{Keywords}: deformation of morphisms, multiple structures,  double
structures,
locally non Cohen--Macaulay schemes, degenerations of curves}
\subjclass[2000]{14H45, 14H10, 14B10, 13D10, 14J29}
\thanks{The first and the second author were partially supported by grants
MTM2006--04785 and MTM2009--06964 and by the UCM research group 910772. They
also thank the Department of Mathematics of the University of
Kansas for its hospitality. The third author thanks the General Research Fund
(GRF) of the University of Kansas  and the Simons foundation collaboration grant
for partially supporting this research. He
also thanks the Department of Algebra of the Universidad Complutense de Madrid
for
its hospitality.}
\begin{document}

\begin{abstract} In this article we show that a wide range of multiple
structures
on curves arise whenever a family of embeddings degenerates to a morphism
$\varphi$ of
degree $n$. One could expect to see, when an embedding
degenerates to such a morphism, the appearance of a
locally
Cohen--Macaulay multiple structure of certain kind (a so--called \emph{rope}
of multiplicity $n$). We show that
this expectation is naive and that both locally Cohen--Macaulay and non
Cohen--Macaulay multiple structures might occur in this situation. In seeing
this
we find out that many multiple structures can be \emph{smoothed}. When we specialize
to the case of double structures we are
able to say much more. In particular, we find numerical conditions, in terms
of the degree and the arithmetic genus, for the existence of many smoothable
double structures. Also, we show that the
existence of these double structures is determined, although not uniquely, by
the elements of certain space of vector bundle homomorphisms, which are related
to the first order infinitesimal deformations of $\varphi$.
In many instances, we show that, in order to determine a double structure
uniquely, looking merely at a first order deformation of $\varphi$ is not
enough; one needs to choose also a formal deformation.

\end{abstract}

\maketitle

\section*{Introduction}

\noindent In this article we study projective \emph{multiple structures}
embedded in projective space, not necessarily locally Cohen--Macaulay, on curves.
We then specialize our study to the case of \emph{double structures} to
obtain very concrete results. We do this following two motivations. Our
first motivation is to give evidence that multiplicity $n$ structures of a
certain kind  arise
naturally
when a family of embeddings degenerates to a morphism of degree $n$ or, more
precisely,
as the flat limit of the images of the embeddings of such families. Multiple
structures are everywhere non reduced schemes. Still, some of them, the
so--called \emph{ropes} are relatively nice (a rope on a smooth,
irreducible projective curve $Y$ of $\mathbf P^r$, is locally Cohen--Macaulay
and contained in the first infinitesimal neighborhood of $Y$).
When the multiple structure is a rope, and more so, when, in
addition, its multiplicity is $2$ (we call it in this case a \emph{ribbon})
this phenomenon
has been studied in several situations (see e.g. \cite{Fong},
\cite{GGPropes} and \cite{criterion}). A family of embeddings degenerating to
a finite morphism is a natural occurrence in algebraic geometry, and considering
the past evidence, it is reasonable to expect the natural limit of such a
degeneration to be a rope. Our results show that this
expectation is naive
and far from true and this is
one of the novelties of this article: to show
(see  Theorem~\ref{general.nonCMsmoothing} for multiplicity $n$
structures and
Corollary~\ref{general.nonCMsmoothing.ribbons},
Proposition~\ref{general.nonCMsmoothing.corollary} and
Theorems~\ref{elliptic} and \ref{elliptic2} for double structures) that a wide range of
locally non Cohen--Macaulay multiple structures of multiplicity $n$ that are
\emph{generically ropes} do appear naturally as limits of
images of embeddings that degenerate to an $n$--to--one morphism.
In
particular, we prove that all those multiple structures are \emph{smoothable}
(i.e., can be deformed to smooth, irreducible curves). Observe that in the case
$n=2$, all multiple structures, i.e., all double
structures, are generically ribbons. Thus when we focus on multiplicity $2$ in
Sections 2 to 5 we are looking at double structures in all generality.  Why is that
sometimes the
limit of images of a family of embeddings is a locally non Cohen--Macaulay
multiple
structure, instead  of a rope? Being locally non Cohen--Macaulay is,
obviously, a local question and indeed, the appearance of locally non
Cohen--Macaulay multiple structures as limits should be detected by local
computations. However, our results show that it is the
global geometry that dictates the local algebra and that the appearance of
the locally non Cohen--Macaulay structures is the direct result of such a
dictate. More precisely, this is the global reason for the appearance of
locally non
Cohen--Macaulay multiple structures:
the genus of the smooth, embedded curves do not
always match the arithmetic genus of the expected rope. If such is the case,
the
flat limit has to contain  necessarily not only this rope but also some embedded
points so
that the genera can match.   Theorems~\ref{general.nonCMsmoothing},
\ref{elliptic} and \ref{elliptic2},
Corollary~\ref{general.nonCMsmoothing.ribbons} and
Propositions~\ref{general.nonCMsmoothing.corollary} and \ref{nonCM.numerical.2}
show that this situation happens very often.

\medskip

\noindent Our second motivation is to give a geometric interpretation of the
non
surjective homomorphisms belonging to Hom$(\mathcal I/\mathcal I^2, \mathcal
E)$. Before going on, let us say what $\mathcal I$ and $\mathcal E$ are. Let
$X$ and $Y$ be smooth, irreducible projective curves, let $i$ be an embedding
of $Y$ in $\mathbf P^r$ and let $\mathcal I$ be the ideal sheaf of $i(Y)$ in
$\mathbf P^r$. Assume there exists a morphism $\varphi$ from $X$ to $\mathbf
P^r$ and a  morphism $\pi$ of degree $n$ from $X$ to $Y$ such that $\varphi$
factors as $i \circ \pi$ and let $\mathcal E$ be the trace zero module of
$\pi$ (which is a vector bundle of rank $n-1$). It is well known (see for
instance~\cite{HV} for $n=2$; the arguments
for arbitrary $n$ are essentially the same) that a surjective element
of Hom$(\mathcal I/\mathcal I^2, \mathcal E)$ corresponds to a rope of
multiplicity $n$ in
$\mathbf P^r$ whose reduced structure is $i(Y)$. In this direction, the second
author generalized this fact in~\cite[Proposition 2.1]{Gon} and showed that an
element of
Hom$(\mathcal I/\mathcal I^2, \mathcal E)$ corresponds to a pair $(\widehat Y,
\hat i)$, where $\widehat Y$ is a rope on $Y$ and $\hat i$ is a morphism from
$\widehat Y$ to $\mathbf P^r$ extending $i$. There is another geometric
interpretation of the surjective elements $\mu$ of Hom$(\mathcal I/\mathcal I,
\mathcal E)$. In~\cite{criterion} we showed that, in a very general setting and
under natural conditions, such elements mean not only the existence of the
embedded ropes mentioned before, but also the fact that these ropes can be
smoothed. Thus, a natural question to ask is whether a similar picture exists
when $\mu$ is not surjective. In this article we show (see
Theorem~\ref{general.nonCMsmoothing}) that, under quite general and natural
conditions on $\mathcal E$ and $i(Y)$, the nonzero non surjective
homomorphisms
$\mu \in$  Hom$(\mathcal I/\mathcal I^2, \mathcal E)$ with image of rank $n-1$
bear witness to the
existence of locally non Cohen--Macaulay, rope--like multiple structures on
$Y$ which can be
smoothed. Moreover, if $n=2$, Corollary~\ref{general.nonCMsmoothing.ribbons}
shows that the nonzero non surjective
homomorphisms
$\mu \in$  Hom$(\mathcal I/\mathcal I^2, \mathcal E)$
bear witness to the
existence of locally non Cohen--Macaulay double structures on $Y$ which can be
smoothed.

\medskip

\noindent When we specialize to multiplicity $2$ structures, our geometric study
of the elements of \linebreak
Hom$(\mathcal I/\mathcal I^2,
\mathcal E)$
goes further and reveals that the assignment to $\mu$ of a smoothable double
structure is quite subtle.  We suggested in the previous paragraph that,
according to Corollary~\ref{general.nonCMsmoothing.ribbons}, $\mu$ ``produces" a
smoothable
double structure. However, as  Proposition~\ref{geom.inter.rat} shows, $\mu$ can
be related to many different double structures. To understand the process we
mention first that  the connection of an algebraic object such as $\mu$ (a
vector bundle homomorphism) with geometric objects such as deformations of
morphisms and double structures is made through the study of the first order
infinitesimal deformations of $\varphi$ (for details on this connection see
\cite[\S 3]{Gon}). Indeed, to each $\mu$ of Hom$(\mathcal I/\mathcal I^2,
\mathcal
E)$ we can associate (not in a unique way) a first infinitesimal deformation of
$\varphi$. Proposition~\ref{geom.inter.rat} tells that neither the information
encoded by $\mu$ nor even the first order infinitesimal deformation of $\varphi$
chosen are enough to determine a unique double structure. Instead, in order to
determine a unique double structure  we need to look not only at a first order
infinitesimal deformation of $\varphi$ but also at how this first order
infinitesimal deformation extends to higher orders. More geometrically put, in
order to determine a double structure starting from a given tangent vector $v$
to the base $\mathcal V$ of the semiuniversal deformation space of $\varphi$ we
need to choose a way to integrate $v$ along a path of $\mathcal V$.

\section{Smoothable rope--like multiple structures on
curves}\label{general.section}

\noindent The purpose of this section is to show that under quite general
conditions there exist smoothable multiple structures (which are
generically ropes but not necessarily locally
Cohen--Macaulay) on curves. We do so
in Theorem~\ref{general.nonCMsmoothing}, where we see how
these multiple structures appear naturally when we make degenerate a family of
embeddings to an $n$--to--one morphism.  First we set up
the notation to be used and recall the definitions of the
objects and concepts we will study in this section.

\begin{noname}\label{notation}
{\bf Notation and set--up:} {\rm Throughout the article, unless otherwise
explicitly stated, we will use the following
notation
and
set--up:
\begin{enumerate}
\item We work over an algebraically closed field of characteristic $0$.
\item $Y$ is a smooth, irreducible projective curve embedded in
$\mathbf P^r$, $r \geq 3$, of genus $g$ and degree $d$.
\item $\mathcal I$ is the ideal sheaf of
$Y$ in $\mathbf P^r$.
\item $\mathcal E$ is a vector bundle on $Y$ of rank $n-1$ and degree $-e$,
and $\tilde g=g - \chi(\mathcal E)$.
\item $\mu$ is a homomorphism of $\textrm{ Hom}(\mathcal I/\mathcal I^2,\mathcal
E)$,
 $\mathcal E'$ is the image of $\mu$
and $\hat g=g - \chi(\mathcal E')$.
\end{enumerate}}
\end{noname}

\begin{definition}\label{def.double.structure}
{\rm A subscheme
$\widetilde Y$ of $\mathbf P^r$ is a \emph{multiple structure} on  $Y$ of
multiplicity $n$
if the reduced structure of $\widetilde Y$ is $Y$ and the degree of $\widetilde
Y$ is $n$ times the degree of
$Y$. If $n=2$ we say that $\widetilde Y$ is a \emph{double structure} on $Y$.}
\end{definition}

\noindent Note that a multiple structure $\WY$ need not be locally
Cohen--Macaulay. Among locally Cohen--Macaulay multiple structures we are
interested in those ones called \emph{ropes} and among locally non
Cohen--Macaulay multiple structures we are interested in those who are
generically ropes. We give now the precise definitions:

\begin{definition}\label{def.ribbon}
{\rm Let $\WY$ be a scheme.
\begin{enumerate}
 \item We say that $\WY$ is a \emph{rope} of multiplicity $n$ on  $Y$ with
conormal bundle
$\SE$ (of rank $n-1$) if
the reduced structure of $\widetilde Y$ is $Y$ and
\begin{enumerate}
\item
$\SI^2=0$ and
\item
$\SI$ and $\SE$ are isomorphic as $\SO_{Y}$--modules.
\end{enumerate}
Note that a rope of multiplicity $n$ is a multiple structure of multiplicity
$n$. Note also that we can gave the same definition of rope if $Y$ is
a smooth curve non necessarily complete.
\item A rope of multiplicity $2$ on $Y$ is called a \emph{ribbon} on $Y$.
\item We say that $\WY$ is a \emph{rope--like} multiple
structure of
multiplicity $n$ on $Y$ if $\WY$ is generically a rope of
multiplicity
$n$ on $Y$, i.e., if there exists a non--empty open set $\WU$ of $\WY$
such that $\WU$ is a rope of
multiplicity $n$ on $\WU \cap Y$.
\end{enumerate}
}
\end{definition}

\begin{definition}\label{defi.smoothing}
 {\rm Let $\WY$ be a subscheme of $\mathbf P^r$. By a \emph{smoothing} of $\WY$
in
$\mathbf
P^r$ we mean a flat, integral family $\SY$ of subschemes of $\mathbf
P^r$ over a smooth affine, irreducible curve $T$, such that over a closed point
$0 \in T$,
$\SY_0=\WY$ and over the remaining points $t$ of $T$, $\SY_t$ is a smooth,
irreducible curve.}
\end{definition}

\noindent We introduce a homomorphism defined in~\cite[Proposition 3.7]{Gon}:

\begin{proposition}\label{morphism.miguel}
Let $C$ be a smooth irreducible projective curve, let $\pi: C  \longrightarrow
Y$ be a morphism of degree $n$,
let $\mathcal E$ be the trace zero module
of
$\pi$  and let $\varphi: C \longrightarrow \mathbf P^r$
be the composition of $\pi$ followed by the inclusion of $Y$ in $\mathbf P^r$.
There exists a homomorphism
\begin{equation*}
 H^0(\mathcal N_\varphi) \overset{\Psi}\longrightarrow \mathrm{Hom}(\pi^*(\mathcal I/\mathcal I^2), \mathcal O_X),
\end{equation*}
that appears when taking cohomology on the commutative diagram~\cite[(3.3.2)]{Gon}. Since
\begin{equation*}
\mathrm{Hom}(\pi^*(\mathcal I/\mathcal I^2), \mathcal O_X)=
\mathrm{Hom}(\mathcal I/\mathcal I^2, \pi_*\mathcal O_X)=\mathrm{Hom}(\mathcal
I/\mathcal I^2, \mathcal O_Y) \oplus \mathrm{Hom}(\mathcal I/\mathcal I^2,
\mathcal E),
\end{equation*}
the homomorphism $\Psi$ has two components,
\begin{eqnarray*}
H^0(\mathcal N_\varphi) & \overset{\Psi_1}  \longrightarrow &
\mathrm{Hom}(\mathcal I/\mathcal I^2, \mathcal O_Y) \textrm{ and } \cr
H^0(\mathcal N_\varphi) & \overset{\Psi_2}  \longrightarrow & \mathrm{Hom}(\mathcal I/\mathcal I^2, \mathcal E).
\end{eqnarray*}
\end{proposition}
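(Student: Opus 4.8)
The plan is to produce $\Psi$ from the functoriality of the normal sheaf along the factorization $\varphi=i\circ\pi$, and then to split it by means of the trace decomposition $\pi_*\SO_C=\SO_Y\oplus\SE$. First I would recall that, $C$ being a smooth curve, the canonical map $\ST_C\to\varphi^*\ST_{\mathbf P^r}$ is injective (a nonzero morphism from a line bundle to a locally free sheaf on a smooth curve), so that $\SN_\varphi$ is by definition its cokernel; likewise $\ST_C\to\pi^*\ST_Y$ is injective, with cokernel a torsion sheaf supported on the ramification of $\pi$. Pulling back by $\pi$ the normal bundle sequence of $Y$ in $\mathbf P^r$ (a short exact sequence of vector bundles, so that $\pi^*$ of it stays exact) gives
\[
0 \longrightarrow \pi^*\ST_Y \longrightarrow \varphi^*\ST_{\mathbf P^r} \longrightarrow \pi^*\SN_{Y/\mathbf P^r} \longrightarrow 0,
\]
where I use $\varphi^*\ST_{\mathbf P^r}=\pi^*i^*\ST_{\mathbf P^r}$. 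Since $\ST_C\to\varphi^*\ST_{\mathbf P^r}$ factors through $\pi^*\ST_Y$, comparing cokernels yields a canonical (in fact surjective) homomorphism $\SN_\varphi\to\pi^*\SN_{Y/\mathbf P^r}$. This is precisely the map that comes out of the cohomology of the commutative diagram \cite[(3.3.2)]{Gon}, so this step amounts to transcribing that construction.

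Next I would take global sections. As $Y$ is smooth in $\mathbf P^r$, the conormal sheaf $\SI/\SI^2$ is locally free on $Y$, so $\SH om$ commutes with the pullback and
\[
\pi^*\SN_{Y/\mathbf P^r}=\pi^*\SH om_{\SO_Y}(\SI/\SI^2,\SO_Y)=\SH om_{\SO_C}(\pi^*(\SI/\SI^2),\SO_C).
\]
Applying $H^0$ to $\SN_\varphi\to\pi^*\SN_{Y/\mathbf P^r}$ and using $H^0(\SH om)=\mathrm{Hom}$ then gives the desired homomorphism $\Psi\colon H^0(\SN_\varphi)\to\mathrm{Hom}(\pi^*(\SI/\SI^2),\SO_C)$ (in the statement the curve $C$ is also written $X$).

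Finally I would split the target. The adjunction between $\pi^*$ and $\pi_*$ identifies $\mathrm{Hom}_{\SO_C}(\pi^*(\SI/\SI^2),\SO_C)$ with $\mathrm{Hom}_{\SO_Y}(\SI/\SI^2,\pi_*\SO_C)$, and since the characteristic is $0$ the normalized trace splits the inclusion $\SO_Y\hookrightarrow\pi_*\SO_C$, so $\pi_*\SO_C=\SO_Y\oplus\SE$ with $\SE$ the trace zero module; hence
\[
\mathrm{Hom}_{\SO_Y}(\SI/\SI^2,\pi_*\SO_C)=\mathrm{Hom}(\SI/\SI^2,\SO_Y)\oplus\mathrm{Hom}(\SI/\SI^2,\SE),
\]
and composing $\Psi$ with the two projections defines $\Psi_1$ and $\Psi_2$. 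I do not expect a serious obstacle, as the proposition merely restates \cite[Proposition 3.7]{Gon}; the two points needing genuine care are the behaviour of the normal-sheaf construction at the ramification of $\pi$ (where $\ST_C\to\pi^*\ST_Y$ drops rank but is still a sheaf monomorphism, so the snake-lemma comparison of cokernels producing $\SN_\varphi\to\pi^*\SN_{Y/\mathbf P^r}$ goes through unchanged) and checking that the adjunction isomorphism used is the canonical one, so that the decomposition into $\Psi_1$ and $\Psi_2$ is intrinsic.
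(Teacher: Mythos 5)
Your proposal is correct and follows essentially the same route as the paper, which simply invokes the construction of \cite[Proposition 3.7]{Gon}: the surjection $\mathcal N_\varphi \to \pi^*\mathcal N_{Y,\mathbf P^r}$ you obtain by comparing cokernels is exactly the sequence \eqref{sequence.miguel}, and the splitting of the target via adjunction and the trace decomposition $\pi_*\mathcal O_C=\mathcal O_Y\oplus\mathcal E$ is the same as in the statement. No discrepancies to report.
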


\noindent We also recall this sequence of the commutative
diagram~\cite[(3.3.2)]{Gon}):
\begin{equation}\label{sequence.miguel}
0 \longrightarrow \mathcal N_\pi \longrightarrow \mathcal N_\varphi
\longrightarrow
\pi^*\mathcal N_{Y,\mathbf P^N} \longrightarrow 0.
\end{equation}

\begin{theorem}\label{general.nonCMsmoothing}
Let $Y, r,  \mathcal E, \tilde g, \mu, \mathcal E'$ and $\hat g$ be as
in \ref{notation} and assume that the rank of the image $\mathcal E'$ of $\mu$
is $n-1$.
If
\begin{enumerate}
\item there exist a smooth irreducible projective curve $C$ and a
morphism $\pi: C \longrightarrow Y$ of degree $n$ whose trace zero
module is $\mathcal E$;
\item $h^0(\mathcal O_Y(1)) + h^0(\mathcal E \otimes \mathcal O_Y(1))  \geq
r+1$; and
\item $h^1(\mathcal O_Y(1))=h^1(\mathcal E \otimes \mathcal O_Y(1))=0$,
\end{enumerate}
then there
exist
\begin{enumerate}
\item[(i)]  a rope--like multiple structure $\widetilde Y$  supported
on $Y$, of multiplicity $n$ and arithmetic genus $\tilde g$,
embedded in $\mathbf P^r$;
\item[(ii)] a rope
$\widehat Y$ supported on $Y$,  of multiplicity $n$ and arithmetic genus $\hat
g$ and with conormal
bundle $\mathcal E'$, contained in $\widetilde Y$;
\item[(iii)]
a smooth irreducible algebraic curve $T$
with a distinguished closed point $0$, a flat family $\mathcal C$ over $T$ and
a
$T$--morphism
$\Phi: \mathcal C \longrightarrow \mathbf P^{\tilde g}_{T}$ such that
\begin{enumerate}
\item[(a)] $\mathcal C_t$ is a smooth irreducible curve of genus $\tilde g$,
\item[(b)] $\Phi_t$ is an embedding
for all $t \neq 0$, and
\item[(c)] $\Phi_0$ is of degree $n$ onto $Y$ and
$[\Phi(\mathcal
X)]_0=\widetilde Y$
\end{enumerate}
(in particular, $\widetilde Y$ is smoothable in $\mathbf
P^r$).
\end{enumerate}
\end{theorem}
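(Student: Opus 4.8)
The plan is to realize $\WY$ as the flat limit of a family of embeddings deforming the $n$-to-one morphism $\varphi = i \circ \pi: C \to \mathbf P^r$ coming from hypothesis (1), using the homomorphism $\Psi_2$ of Proposition~\ref{morphism.miguel} to produce a section of the normal sheaf that "points in the direction" of $\mu$. First I would set $\varphi = i\circ\pi$ with $C$ and $\pi$ as in (1), so that $\varphi^*\mathcal O_Y(1) = \pi^*\mathcal O_Y(1)$ and, by the projection formula together with hypotheses (2) and (3), one computes $h^0(\varphi^*\mathcal O(1)) = h^0(\mathcal O_Y(1)) + h^0(\mathcal E\otimes\mathcal O_Y(1)) \geq r+1$ and $h^1 = 0$; the latter vanishing, via standard deformation theory of maps to projective space (e.g. using the Euler sequence pulled back along $\varphi$), gives that $H^0(\mathcal N_\varphi)$ is large and that $\varphi$ is unobstructed, so the semiuniversal deformation space $\mathcal V$ of $\varphi$ is smooth. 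The arithmetic genus bookkeeping: since $\mathcal E'\subseteq\pi_*\mathcal O_C$ has rank $n-1$ with $\hat g = g - \chi(\mathcal E')$, and similarly $\tilde g = g - \chi(\mathcal E)$, the curve $C$ itself has genus $\tilde g$ because $\chi(\mathcal O_C) = \chi(\mathcal O_Y) + \chi(\mathcal E)$.

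Next I would construct the rope $\WY$ and the rope $\widehat Y$. By \cite[Proposition 2.1]{Gon}, the homomorphism $\mu \in \mathrm{Hom}(\mathcal I/\mathcal I^2,\mathcal E)$ corresponds to a pair $(\widehat Y, \hat i)$ with $\widehat Y$ a rope on $Y$ with conormal bundle $\mathcal E' = \mathrm{im}\,\mu$ (of rank $n-1$ by hypothesis) and $\hat i$ extending $i$; its arithmetic genus is $\hat g$ by the exact sequence $0\to\mathcal E'\to\mathcal O_{\widehat Y}\to\mathcal O_Y\to 0$. To get $\WY$ of multiplicity $n$ and arithmetic genus $\tilde g$, I would take the flat limit of a general family of embeddings near $\varphi$ in $\mathcal V$: choose a curve $T\subseteq\mathcal V$ through the point $[\varphi]$ whose tangent direction $v\in H^0(\mathcal N_\varphi)$ satisfies $\Psi_2(v) = \mu$ (possible because $\mu$ is in the image of $\Psi_2$ under the stated hypotheses — this is where one uses \cite[\S 3]{Gon} and the surjectivity/range statements there, together with $h^1(\mathcal E\otimes\mathcal O(1))=0$), and such that the generic nearby deformation $\varphi_t$ is actually an embedding. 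Spreading out the images $\varphi_t(C)\subset\mathbf P^r$ over $T$ and taking the closure gives a flat family $\mathcal Y\subset \mathbf P^r_T$ whose special fiber $\mathcal Y_0$ is a subscheme supported on $Y = \varphi_0(C)$, of degree $n\cdot\deg Y$ (degree is constant in a flat family), hence a multiple structure of multiplicity $n$; it is generically a rope because away from the ramification/non-embedding locus the limit is governed by the first-order data $\Psi_2(v)=\mu$, which generically has rank $n-1$. Flatness forces $\chi(\mathcal O_{\mathcal Y_0}) = \chi(\mathcal O_{\mathcal Y_t}) = \chi(\mathcal O_C) = 1-\tilde g$, so $\WY := \mathcal Y_0$ has arithmetic genus $\tilde g$. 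That $\widehat Y\subseteq\WY$ follows because the rope attached to $\mu$ is precisely the "locally Cohen--Macaulay part" of the limit direction, i.e. $\WY$ and $\widehat Y$ share the ideal-sheaf behaviour to first order along $Y$; concretely the surjection $\mathcal I_{\WY}\twoheadrightarrow\mathcal I_{\widehat Y}$ comes from comparing the conormal data.

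For part (iii), I would take $\mathcal C = C\times T$ (or the family of abstract curves, all isomorphic to $C$ since $C$ is unobstructed as a curve of genus $\tilde g$), and $\Phi: \mathcal C\to\mathbf P^{\tilde g}_T$ the family of maps $\{\varphi_t\}$ composed with a relative embedding $\mathbf P^r\hookrightarrow\mathbf P^{\tilde g}$ — or more precisely re-embed via a complete linear system of the appropriate degree so that the target is $\mathbf P^{\tilde g}$; the conditions (a)–(c) then read off directly: (a) $\mathcal C_t\cong C$ has genus $\tilde g$, (b) $\Phi_t$ is an embedding for $t\neq 0$ by the genericity choice above, and (c) $\Phi_0 = \varphi_0$ is $n$-to-one onto $Y$ with $[\Phi(\mathcal C)]_0 = \mathcal Y_0 = \WY$ by construction of the flat limit. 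The main obstacle I anticipate is Step two: producing a deformation $\varphi_t$ that is genuinely an embedding for general $t$ while simultaneously hitting the prescribed first-order direction $\mu$ under $\Psi_2$. This requires (i) knowing the image of $\Psi_2$ is large enough to contain $\mu$ — which should follow from chasing the cohomology of sequence~\eqref{sequence.miguel} and the diagram \cite[(3.3.2)]{Gon} using $h^1(\mathcal E\otimes\mathcal O_Y(1))=0$ — and (ii) a "general deformation of an $n$-to-one map with enough sections is an embedding" statement, which needs hypothesis (2) ($h^0\geq r+1$, giving enough functions to separate points and tangents generically) plus a dimension count showing the locus of non-embeddings in $\mathcal V$ has positive codimension. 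Handling (ii) carefully — likely by a Bertini-type or general-position argument on the linear system $H^0(\varphi^*\mathcal O(1))$ along the deformation — is the technical heart, and one must also verify the limit is reduced-to-$Y$ of the correct multiplicity rather than acquiring extra components, which again follows from flatness and degree constancy.
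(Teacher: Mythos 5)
Your strategy is the same as the paper's: lift $\mu$ to $\nu\in H^0(\mathcal N_\varphi)$ via $\Psi_2$, extend the corresponding first--order deformation $\tilde\varphi$ to an algebraic one--parameter family whose general member is an embedding, and take $\widetilde Y$ to be the central fibre of the flat family of images, with $\widehat Y$ coming from \cite[Proposition 2.1]{Gon}. However, two steps that you leave open are exactly the ones the paper settles by citation, and your proposed substitutes would not suffice. First, the step you correctly single out as the technical heart --- producing a family $\Phi$ over a curve $T$ with $\Phi|_{\Delta}=\tilde\varphi$ and $\Phi_t$ an embedding for $t\neq 0$ --- is not obtained by a Bertini or general--position argument on $H^0(\varphi^*\mathcal O(1))$ together with a codimension count; it is precisely \cite[Theorem 1.1]{GGPropes}, a substantive prior result whose hypotheses are exactly conditions (2) and (3) of the theorem. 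Without that input (or an equivalent), your argument does not close: unobstructedness of $\varphi$ alone gives you curves in $\mathcal V$ tangent to $\nu$, but not that a general such curve parameterizes embeddings away from $0$. Second, your justification that $\widehat Y\subseteq\widetilde Y$ and that $\widetilde Y$ is generically a rope (``the rope attached to $\mu$ is the locally Cohen--Macaulay part of the limit direction'', ``comparing the conormal data'') is a gesture, not a proof. The paper gets both facts at once from \cite[Theorem 3.8 (1)]{Gon}, which identifies $(\mathrm{im}\,\tilde\varphi)_0$ with the image rope $\overline i(\overline Y)=\widehat Y$; since $\mathcal Y_0$ contains $(\mathrm{im}\,\tilde\varphi)_0$, the containment and the rope--like property follow. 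Note this is where the rank $n-1$ hypothesis on $\mathcal E'$ enters: it guarantees $\widehat Y$ has multiplicity $n$, so $\widetilde Y$ is generically equal to it.

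Two smaller corrections. The surjectivity of $\Psi_2$ does not come from $h^1(\mathcal E\otimes\mathcal O_Y(1))=0$; it comes from the cohomology sequence of \eqref{sequence.miguel} and the vanishing $H^1(\mathcal N_\pi)=0$, which holds simply because $\mathcal N_\pi$ is supported on the (zero--dimensional) ramification locus of $\pi$. And in part (iii) you should not take $\mathcal C=C\times T$: the family produced by \cite[Theorem 1.1]{GGPropes} is a flat family of smooth curves of genus $\tilde g$ that need not be isotrivial, and nothing in the argument requires it to be.
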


\begin{proof}
Let $\varphi$ be the composition of the
inclusion of $Y$ in $\mathbf P^r$ followed by $\pi$. Thus $C$ is a smooth curve
of genus $\tilde g$.
Since $C$ is a curve, $H^1(\mathcal N_\pi)=0$, so the map $\Psi_2$ introduced
in Proposition~\ref{morphism.miguel} surjects.
Let $\nu \in H^0(\mathcal
N_\varphi)$ be such that $\Psi_2(\nu)=\mu$. Such $\nu$ corresponds to an
infinitesimal deformation $\widetilde \varphi$ of $\varphi$.  Now we want to
apply~\cite[Theorem 1.1]{GGPropes}. By (3), $h^1(\mathcal O_Y(1))=h^1(\mathcal E
\otimes \mathcal O_Y(1))=0$ and by (2), $h^0(\varphi^*\mathcal O_{\mathbf
P^r}(1)) \geq r+1$. Then, according to~\cite[Theorem 1.1]{GGPropes} there exist
a smooth irreducible algebraic curve $T$
with a distinguished closed point $0$, a flat family $\mathcal C$ over $T$ and
a
$T$--morphism
$\Phi: \mathcal C \longrightarrow \mathbf P^{r}_{T}$ such that
\begin{enumerate}
\item[(a)] $\mathcal C_t$ is a smooth, irreducible curve;
\item[(b)] the restriction of $\Phi$ to the first infinitesimal neighborhood of
$0$
is $\tilde \varphi$ (and hence $\Phi_0=\varphi$); and
\item[(c)] for any $t \in T$, $t \neq 0$, $\Phi_t$ is an embedding into $\mathbf
P^r$.
\end{enumerate}
In particular, $\mathcal C_0=C$. Then, for all $t \in T$,
$\mathcal C_t$ is a smooth, irreducible curve of genus $\tilde g$, so $\mathcal
Y=\Phi(\mathcal C)$ is
a flat family over $T$ of $1$--dimensional subschemes of $\mathbf P^r$ of
arithmetic genus $\tilde g$.
Moreover, the reduced part of $\mathcal Y_0$ is $Y$ and, because of b) and c)
above,
the degree of $\mathcal Y_0$  is $n$ times the degree of $Y$. Thus $\mathcal
Y_0$ is a multiple structure of multiplicity $n$ and
arithmetic genus $\tilde g$, supported on $Y$. On the other hand, because of
b)
$\mathcal Y_0$ contains $(\textrm{im}\tilde{\varphi})_0$. By~\cite[Proposition
2.1]{Gon}), $\mu$ corresponds to a pair $(\overline Y,\overline i)$, where
$\overline Y$ is a rope of multiplicity $n$ on $Y$ and $\overline i$ a  morphism
(though not
necessarily a closed embedding), from $\overline Y$ to $\mathbf P^r$, that
extends the inclusion of $Y$ in $\mathbf P^r$. In addition, $\overline
i(\overline Y)$ is a rope of multiplicity $n$ whose conormal bundle is $\mathcal
E'$ and we set
$\widehat Y= \overline
i(\overline Y)$.
Now, by~\cite[Theorem 3.8 (1)]{Gon},
$(\textrm{im}\tilde{\varphi})_0$ equals $\overline i(\overline Y)$.
In particular, $\mathcal Y_0$ is a rope--like multiple structure
of multiplicity $n$ and
  arithmetic genus $\tilde g$.
Thus we may
set $\widetilde Y=\mathcal Y_0$.
\end{proof}

\begin{observation}\label{nonCMremark} {The following are equivalent:
\begin{enumerate}
\item $\mathcal E = \mathcal E'$ (i.e, $\mu$ is surjective),
\item $\tilde g=\hat g$,
\item $\widetilde Y=\widehat Y$,
\item $\widetilde Y$ is a rope of multiplicity $n$.
\end{enumerate}}
\noindent If $\widetilde Y$ is not a rope, then
$\widetilde Y$ contains  $\widehat Y$ and
some embedded points.
\end{observation}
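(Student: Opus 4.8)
The plan is to prove Observation~\ref{nonCMremark} by exploiting the dictionary, established in \cite{Gon}, between homomorphisms $\mu$ and pairs $(\overline Y, \overline i)$, together with the genus bookkeeping built into \ref{notation}. First I would record the basic numerical identities: since $\widehat Y = \overline i(\overline Y)$ is a rope of multiplicity $n$ on $Y$ with conormal bundle $\mathcal E'$, the structure sequence $0 \to \mathcal E' \to \mathcal O_{\widehat Y} \to \mathcal O_Y \to 0$ gives $\chi(\mathcal O_{\widehat Y}) = \chi(\mathcal O_Y) + \chi(\mathcal E')$, hence the arithmetic genus of $\widehat Y$ is $1 - \chi(\mathcal O_{\widehat Y}) = g - \chi(\mathcal E') = \hat g$, consistent with the statement of Theorem~\ref{general.nonCMsmoothing}(ii). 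The analogous computation with the rope $\overline i_0(\overline Y_0)$ of conormal bundle $\mathcal E$ attached to the identity-type situation shows that $\tilde g = g - \chi(\mathcal E)$ is precisely the arithmetic genus a rope with conormal bundle $\mathcal E$ would have. These two facts turn the equivalences into a single linear-algebra statement about $\mathcal E$ and its image $\mathcal E'$.

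Next I would prove the chain of implications. For $(1) \Rightarrow (2)$: if $\mu$ is surjective then $\mathcal E' = \mathcal E$, so $\chi(\mathcal E') = \chi(\mathcal E)$ and therefore $\hat g = g - \chi(\mathcal E') = g - \chi(\mathcal E) = \tilde g$. For $(2) \Rightarrow (1)$: we always have a surjection $\mathcal E \twoheadrightarrow \mathcal E'$ with some kernel $\mathcal K$ (a torsion-free, hence locally free, sheaf on the smooth curve $Y$), giving $\chi(\mathcal E) = \chi(\mathcal E') + \chi(\mathcal K)$; since $\mathcal E'$ has rank $n-1$ by hypothesis, $\mathcal K$ has rank $0$, so $\mathcal K = 0$ and $\mu$ is surjective. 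So in fact $\tilde g = \hat g$ forces $\chi(\mathcal K) = 0$, and for a rank-zero locally free sheaf this means $\mathcal K = 0$. For $(1) \Leftrightarrow (4)$: by the construction in the proof of Theorem~\ref{general.nonCMsmoothing}, $\widetilde Y = \mathcal Y_0 \supseteq \widehat Y$, and $\widetilde Y$ has arithmetic genus $\tilde g$ while $\widehat Y$ is a rope of arithmetic genus $\hat g$; if $\widetilde Y$ is itself a rope of multiplicity $n$, its conormal bundle is a quotient of $\mathcal E$ of rank $n-1$, and comparing with $\mu$ via \cite[Proposition 2.1]{Gon} and \cite[Theorem 3.8]{Gon} identifies it with $\mathcal E'$, forcing $\widehat Y = \widetilde Y$, whence equality of Hilbert polynomials and $\tilde g = \hat g$. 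For the converse, $(1) \Rightarrow (4)$ is immediate from Theorem~\ref{general.nonCMsmoothing}: when $\mathcal E' = \mathcal E$ the pair $(\overline Y, \overline i)$ is exactly the rope realizing $\mu$ and $\widetilde Y = \widehat Y$ is a rope. Finally $(1) \Leftrightarrow (3)$ follows by combining these: $\widetilde Y = \widehat Y$ as subschemes of $\mathbf P^r$ iff they have the same ideal sheaf iff (both being supported on $Y$ with the containment $\widehat Y \subseteq \widetilde Y$) they have equal Hilbert polynomials iff $\tilde g = \hat g$.

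For the last sentence, if $\widetilde Y$ is not a rope then by the equivalence $\mu$ is not surjective, so $\mathcal E' \subsetneq \mathcal E$ and $\hat g < \tilde g$ (because $\chi(\mathcal E') > \chi(\mathcal E)$ when the rank-zero kernel $\mathcal K$ is a nonzero torsion sheaf, which on a curve has $\chi(\mathcal K) = h^0(\mathcal K) > 0$). The containment $\widehat Y \subseteq \widetilde Y$ is part of Theorem~\ref{general.nonCMsmoothing}(ii), and since the two schemes are supported on the same curve $Y$ but have different arithmetic genera $\hat g < \tilde g$, the difference is accounted for by a nonzero $0$-dimensional subscheme, i.e. $\widetilde Y$ contains $\widehat Y$ together with some embedded points (the length of the embedded-point locus being $\tilde g - \hat g = \chi(\mathcal E') - \chi(\mathcal E) = \mathrm{length}(\mathcal K)$).

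The main obstacle I anticipate is not any single computation but making the identification $(\mathrm{im}\,\tilde\varphi)_0 = \overline i(\overline Y) = \widehat Y$ fully rigorous and checking that the conormal bundle of the rope structure $\widehat Y$ really is $\mathcal E'$ rather than some other quotient of $\mathcal E$; this is where one must invoke \cite[Proposition 2.1]{Gon} and \cite[Theorem 3.8(1)]{Gon} carefully and track that the flat limit in the Theorem~\ref{general.nonCMsmoothing} construction genuinely contains this rope scheme-theoretically. Once that identification is in hand, the equivalences reduce to the elementary observation that a surjection of vector bundles $\mathcal E \twoheadrightarrow \mathcal E'$ between sheaves of the same rank $n-1$ on a smooth curve is an isomorphism if and only if its kernel — necessarily torsion, hence $0$ or of positive length — vanishes, which is detected by $\chi$.
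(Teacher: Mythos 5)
Your overall skeleton is the same as the paper's: (1)$\Leftrightarrow$(2) is pure Euler--characteristic bookkeeping from the definitions of $\tilde g$ and $\hat g$, (2)$\Leftrightarrow$(3) follows from the containment $\widehat Y\subseteq\widetilde Y$ plus equality of Hilbert polynomials, and (3)$\Leftrightarrow$(4) from the fact that a rope of multiplicity $n$ containing a rope of multiplicity $n$ on the same $Y$ must equal it. But there is a genuine error running through your numerics: $\mathcal E'$ is the \emph{image} of $\mu\colon \mathcal I/\mathcal I^2\to\mathcal E$, hence a \emph{subsheaf} of $\mathcal E$, not a quotient. There is no natural surjection $\mathcal E\twoheadrightarrow\mathcal E'$. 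The correct exact sequence is $0\to\mathcal E'\to\mathcal E\to\mathcal Q\to 0$ with $\mathcal Q$ \emph{torsion} (both sheaves have rank $n-1$), so $\chi(\mathcal E)=\chi(\mathcal E')+\operatorname{length}(\mathcal Q)$, giving $\chi(\mathcal E')\leq\chi(\mathcal E)$ and therefore $\hat g=g-\chi(\mathcal E')\geq g-\chi(\mathcal E)=\tilde g$ --- the opposite of your claimed $\hat g<\tilde g$. Your inequality contradicts the paper itself (Proposition~\ref{nonCM.numerical.2}(ii) has $\hat\gamma>\tilde\gamma$; Theorem~\ref{elliptic} has $\tilde g=5$ and $\hat g\in\{5,6,7,9\}$), and your final formula $\operatorname{length}(\mathcal K)=\tilde g-\hat g=\chi(\mathcal E')-\chi(\mathcal E)$ is negative under the correct signs.

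This is not merely cosmetic: as written, your (2)$\Rightarrow$(1) step is vacuous. You argue that the kernel $\mathcal K$ of the (nonexistent) surjection $\mathcal E\to\mathcal E'$ is torsion-free, hence locally free of rank $0$, hence zero --- but that conclusion never uses the hypothesis $\tilde g=\hat g$, so the argument would ``prove'' that every $\mu$ of rank $n-1$ is surjective. The repair is easy and is exactly what the paper does implicitly: $\tilde g=\hat g$ iff $\chi(\mathcal E)=\chi(\mathcal E')$ iff $\operatorname{length}(\mathcal Q)=0$ iff $\mathcal E'=\mathcal E$ iff $\mu$ is surjective. Likewise, for the final sentence the kernel of $\mathcal O_{\widetilde Y}\twoheadrightarrow\mathcal O_{\widehat Y}$ is a torsion sheaf of length $\chi(\mathcal O_{\widetilde Y})-\chi(\mathcal O_{\widehat Y})=\hat g-\tilde g>0$, supported at the embedded points. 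Everything else in your write-up (the structure sequence computing $p_a(\widehat Y)=\hat g$, the containment argument, the rope-in-rope rigidity) is sound and matches the paper's route.
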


\begin{proof}
(1) and (2) are equivalent by the definition of $\tilde g$ and $\hat g$. Since
$\widehat
Y$
is contained in $\widetilde Y$ and both are multiple structures on $Y$ with the
same multiplicity and,
respectively, of arithmetic genus $\hat g$ and $\tilde g$, (2) implies (3).
On the other hand, (3) obviously implies (2). Since $\widehat Y$ is a rope of
multiplicity $n$,
(3) implies (4). Finally, if $\widetilde Y$ is a rope of multiplicity $n$, since
$\widehat Y$ is also a rope of multiplicity $n$ and is contained in
$\widetilde Y$, then $\widetilde Y=\widehat Y$, so (4) implies (3).
\end{proof}

\section{Conditions for the existence of smoothable double structures on
curves}\label{general.double.section}

\noindent In the remaining of the article we focus on double
structures in general. It is well--known that a double
structure on $Y$ is always rope--like (or better yet, ribbon--like), since a
locally Cohen--Macaulay double structure on $Y$ is necessarily a ribbon, so
we will be able to use Theorem~\ref{general.nonCMsmoothing}.
Thus in this section we  apply Theorem~\ref{general.nonCMsmoothing} to give
sufficient numerical conditions to
guarantee the existence of smoothable double structures, not necessarily locally
Cohen--Macaulay, on curves.  We
summarize these numerical conditions in Proposition~\ref{nonCM.numerical.2}. We
keep using
\ref{notation}, but we give an extra piece of convention to be used in
this section and in the remaining of the article.

\begin{noname}\label{notation2}
{\bf Notation and set--up:} {\rm From now on,  $\mathcal E$ is a line bundle on
$Y$.}
\end{noname}

\noindent First of all, we give this corollary  of
Theorem~\ref{general.nonCMsmoothing} for double structures in general, in
which the assumption on the rank imposed on $\mu$ is superfluous:

\begin{corollary}\label{general.nonCMsmoothing.ribbons}
Let $Y, r,  \mathcal E, \tilde g, \mu, \mathcal E'$ and $\hat g$ be as
in \ref{notation} and \ref{notation2} and assume $\mu \neq 0$.
If
\begin{enumerate}
\item $|\mathcal E^{-2}|$ possesses a smooth, effective, non empty divisor;
\item $h^0(\mathcal O_Y(1)) + h^0(\mathcal E \otimes \mathcal O_Y(1))  \geq
r+1$; and
\item $h^1(\mathcal O_Y(1))=h^1(\mathcal E \otimes \mathcal O_Y(1))=0$,
\end{enumerate}
then there
exist
\begin{enumerate}
\item[(i)]  a double structure $\widetilde Y$  supported
on $Y$ of arithmetic genus $\tilde g$,
embedded in $\mathbf P^r$;
\item[(ii)] a ribbon
$\widehat Y$, supported on $Y$  of arithmetic genus $\hat g$ and with conormal
bundle $\mathcal E'$, contained in $\widetilde Y$;
\item[(iii)]
a smooth irreducible algebraic curve $T$
with a distinguished closed point $0$, a flat family $\mathcal C$ over $T$ and
a
$T$--morphism
$\Phi: \mathcal C \longrightarrow \mathbf P^{\tilde g}_{T}$ such that
\begin{enumerate}
\item[(a)] $\mathcal C_t$ is a smooth irreducible curve of genus $\tilde g$,
\item[(b)] $\Phi_t$ is an embedding
for all $t \neq 0$, and
\item[(c)] $\Phi_0$ is of degree $2$ onto $Y$ and
$[\Phi(\mathcal
X)]_0=\widetilde Y$
\end{enumerate}
(in particular, $\widetilde Y$ is smoothable in $\mathbf
P^r$).
\end{enumerate}
\end{corollary}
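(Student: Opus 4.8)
The plan is to deduce Corollary~\ref{general.nonCMsmoothing.ribbons} from Theorem~\ref{general.nonCMsmoothing} by checking that its hypotheses hold in the double structure case $n=2$, where $\SE$ is a line bundle. The only nontrivial matter is hypothesis (1) of the theorem: that there exist a smooth irreducible projective curve $C$ and a degree $2$ morphism $\pi\colon C\to Y$ whose trace zero module is $\SE$. Hypotheses (2) and (3) of the theorem coincide verbatim with (2) and (3) of the corollary, so nothing is needed there. The extra hypothesis of the theorem, that the rank of $\SE'=\mathrm{im}\,\mu$ equals $n-1=1$, is automatic as soon as $\mu\neq 0$, since $\SE$ is a line bundle and a nonzero subsheaf of a line bundle has rank $1$; this is precisely why the rank assumption is ``superfluous'' in the statement.

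So the heart of the proof is the construction of the double cover. First I would recall the standard correspondence between degree $2$ covers $\pi\colon C\to Y$ with $C$ smooth irreducible and pairs $(\SL,D)$ consisting of a line bundle $\SL$ on $Y$ together with a smooth effective divisor $D\in|\SL^{\otimes 2}|$ (the branch divisor), the cover being $C=\mathrm{Spec}_{\,\SO_Y}(\SO_Y\oplus\SL^{-1})$ with algebra structure induced by the section of $\SL^{\otimes 2}$ cutting out $D$. Under this correspondence the trace zero module of $\pi$ is exactly $\SL^{-1}$. Hence, taking $\SL=\SE^{-1}$, hypothesis (1) of the corollary, namely that $|\SE^{-2}|=|\SL^{\otimes 2}|$ possesses a smooth, effective, non-empty divisor $D$, produces a double cover $\pi\colon C\to Y$ with trace zero module $\SE$; moreover $C$ is irreducible precisely because $D$ is non-empty (if $D=\emptyset$ the cover would be disconnected, or at least $\SL$ would be $2$-torsion and $C$ could be reducible), and $C$ is smooth because $D$ is smooth. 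This verifies hypothesis (1) of Theorem~\ref{general.nonCMsmoothing}.

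With all three hypotheses of the theorem in hand (and the rank condition automatic), I would simply invoke Theorem~\ref{general.nonCMsmoothing} with $n=2$. Its conclusion (i) gives the rope--like multiple structure $\widetilde Y$ of multiplicity $2$; but a multiplicity $2$ rope--like structure is a double structure by Definition~\ref{def.double.structure}, which yields conclusion (i) of the corollary. Conclusion (ii) of the theorem gives a rope $\widehat Y$ of multiplicity $2$, i.e.\ a ribbon by Definition~\ref{def.ribbon}(2), with conormal bundle $\SE'$ and arithmetic genus $\hat g$, contained in $\widetilde Y$; this is conclusion (ii). Conclusion (iii) of the theorem is identical word-for-word to conclusion (iii) of the corollary, so nothing further is needed, and in particular $\widetilde Y$ is smoothable in $\mathbf P^r$.

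I expect the main (and really only) obstacle to be the correct bookkeeping of the double cover construction: identifying the trace zero module of the cover associated to $(\SL,D)$ with $\SL^{-1}$ (so that one must set $\SL=\SE^{-1}$ and therefore look in $|\SE^{-2}|$, not $|\SE^{2}|$), and checking that the non-emptiness and smoothness of $D$ give exactly irreducibility and smoothness of $C$. Everything else is a direct citation. If one wished to be careful about the possibility that $\SE^{-2}\cong\SO_Y$ and $D=\emptyset$, note that hypothesis (1) explicitly asks for a \emph{non empty} divisor, which rules this degenerate case out and guarantees $C$ is connected, hence (being smooth) irreducible.
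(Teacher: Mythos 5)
Your proposal is correct and follows essentially the same route as the paper: reduce to Theorem~\ref{general.nonCMsmoothing} with $n=2$, observe that the rank condition is automatic for a nonzero map into a line bundle, and verify hypothesis (1) of the theorem by constructing the smooth irreducible double cover branched along the smooth non-empty divisor in $|\mathcal E^{-2}|$, whose trace zero module is $\mathcal E$. Your bookkeeping of the cyclic cover construction (taking $\mathcal L=\mathcal E^{-1}$ so that the branch divisor lives in $|\mathcal E^{-2}|$ and the trace zero module is $\mathcal L^{-1}=\mathcal E$) matches what the paper leaves implicit.
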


\begin{proof}
We just need to apply Theorem~\ref{general.nonCMsmoothing} for $n=2$. First
note that in this case $\mathcal E$ is a line bundle, so if $\mu$ is nonzero,
the rank of the image of $\mu$ is $1$. Conditions (2) and (3) of
Theorem~\ref{general.nonCMsmoothing} and
Corollary~\ref{general.nonCMsmoothing.ribbons} are stated exactly in the same
way. Condition (1) of Corollary~\ref{general.nonCMsmoothing.ribbons} implies
Condition (1) Theorem~\ref{general.nonCMsmoothing}, since Condition (1) of
Corollary~\ref{general.nonCMsmoothing.ribbons} implies the existence of
a double
cover $\pi: C \longrightarrow Y$ branched along a smooth, effective, non empty
divisor
$B$ in $|\mathcal E^{-2}|$. Then $C$ is smooth and irreducible and $\mathcal E$
 is the trace zero module of $\pi$. Finally, as already pointed out, locally
Cohen--Macaulay double structures are ribbons, so double
structures are always ribbon--like and, in (i) of
Theorem~\ref{general.nonCMsmoothing} we can substitute ``rope--like multiple
structure'' for ``ribbon'' and, obviously, in (ii) and (iii) of
Theorem~\ref{general.nonCMsmoothing}, ``rope'' for ``ribbon'' and ``degree $n$''
for ``degree $2$''.
\end{proof}

\noindent
In the next two
remarks we give simple
numerical conditions which
guarantee the existence of nonzero surjective and non
surjective homomorphisms $\mu$ of Hom$(\mathcal I/\mathcal I^2,\mathcal
E)$.

\begin{remark}\label{general.nonCMsmoothing.remark}
Assume that
(1) and (3)
of Corollary~\ref{general.nonCMsmoothing.ribbons} hold. In addition,
\begin{enumerate}
\item if $g=0$, let $3 \leq e \leq d -1$; and
\item if $g \geq 1$, let $d -e \geq g$,
\end{enumerate}
then there exists a nonzero, non surjective homomorphism $\mu$ of Hom$(\mathcal
I/\mathcal I^2,\mathcal
E)$.
\end{remark}

\begin{proof}
The existence
of a nonzero, non surjective homomorphism in
$\mathrm{Hom}(\mathcal I/\mathcal I^2, \mathcal E)$ is equivalent to
 the existence of a non zero global section in $H^0(\mathcal N_{Y, \mathbf
P^{r}} \otimes \mathcal E)$ with non empty vanishing locus. We consider the
diagram
\begin{equation}\label{N-F}
\xymatrix@C-7pt@R-10pt{
 & 0 \ar[d] & 0 \ar[d] && \\
  & \mathcal E \ar[d] \ar@{=}[r] & \mathcal E \ar[d] & & \\
0 \ar[r] & \mathcal F \otimes \mathcal E \ar[d] \ar[r] &
(\mathcal E \otimes \mathcal O_Y(1))^{\oplus r+1} \ar[d] \ar[r] &
\mathcal N_{Y, \mathbf P^{r}} \otimes \mathcal E \ar@{=}[d] \ar[r]& 0\\
0 \ar[r] & \mathcal T_{Y} \otimes \mathcal E \ar[d] \ar[r] & {\mathcal
T_{\mathbf P^{r }}}_{|Y} \otimes \mathcal E \ar[d] \ar[r] &  \mathcal N_{Y,
\mathbf P^{r}} \otimes \mathcal E \ar[r] &0\\
 & 0 & 0, &&}
\end{equation}
where $\mathcal F$ is the kernel of the composite surjective map
\begin{equation*}
H^0(\mathcal O_Y(1))^{\vee} \otimes \mathcal O_Y(1) \twoheadrightarrow {\mathcal
T_{\mathbf P^{r}}}_{|Y} \twoheadrightarrow \mathcal N_{Y, \mathbf P^{r}}.
\end{equation*}
Recall that $\mathcal E$ has no global sections.
We see also that $H^0(\mathcal T_{Y} \otimes \mathcal E)=0$.
Corollary~\ref{general.nonCMsmoothing.ribbons}, (1) implies $e \geq 0$, so, by
(1) the
degree of $\mathcal T_{Y} \otimes \mathcal E$  is negative except  if
$g=1$ and $e=0$. Since $\mathcal E$ is not trivial, in any case
$H^0(\mathcal T_{Y} \otimes \mathcal E)=0$. On the other hand,
Corollary~\ref{general.nonCMsmoothing}, (3)
 implies that $h^0(\mathcal E \otimes
\mathcal O_Y(1))=d-e-g + 1$, which is a positive number by (1) and
(2). Moreover, (1) and (2) also imply that the degree of $\mathcal E \otimes
\mathcal O_Y(1)$ is positive, so $\mathcal E \otimes \mathcal O_Y(1)$ has
non zero global sections with
non empty vanishing locus. Then taking global sections in the middle exact
sequence of \eqref{N-F} we obtain the desired non zero global sections of
$H^0(\mathcal N_{Y, \mathbf P^{r}} \otimes \mathcal
E)$ with
non empty vanishing locus.
\end{proof}

\begin{remark}\label{general.nonCMsmoothing.remark2}
Assume that
(1) and (3)
of Corollary~\ref{general.nonCMsmoothing.ribbons} hold. If
$\mathcal E \otimes \mathcal O_Y(1)$  is globally generated
(hence $1 \leq e \leq d$ when $g=0$ and $d - e \geq g + 1$
otherwise),
then Hom$(\mathcal I/\mathcal I^2,\mathcal E)$
possesses a surjective homomorphism.
\end{remark}

\begin{proof}
Since  $\mathcal E \otimes \mathcal O_Y(1)$ is globally generated, the result
follows from diagram~\eqref{N-F}.
\end{proof}

\noindent Corollary~\ref{general.nonCMsmoothing.ribbons} and
Remarks~\ref{general.nonCMsmoothing.remark}
and~\ref{general.nonCMsmoothing.remark2} yield the results that follow. The
proof of Proposition~\ref{general.nonCMsmoothing.corollary} is straight
forward.
Proposition~\ref{nonCM.numerical.2} summarizes sufficient conditions on the
genus
and degree of a linearly normal curve $Y$ of $\mathbf P^r$ to guarantee the
existence of smoothable double structures.

\begin{proposition}\label{general.nonCMsmoothing.corollary}
Assume that
(1), (2) and (3) of Corollary~\ref{general.nonCMsmoothing.ribbons} hold.
\begin{enumerate}
\item  If (1) or (2) of
Remark~\ref{general.nonCMsmoothing.remark} holds, then there exist a double
structure $\widetilde Y$ and a ribbon $\widehat Y$ satisfying (i), (ii) and
(iii) of
Corollary~\ref{general.nonCMsmoothing.ribbons}.
\item If $\mathcal E \otimes \mathcal O_Y(1)$ is globally generated, then
there
exist ribbons $\widetilde Y$ on $Y$ with conormal bundle $\mathcal E$ (hence, of
arithmetic genus $e+2g -1$) satisfying (i) and (iii) of
Corollary~\ref{general.nonCMsmoothing.ribbons}.
\end{enumerate}
\end{proposition}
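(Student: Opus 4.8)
The plan is to deduce both parts directly from Corollary~\ref{general.nonCMsmoothing.ribbons}, feeding it a suitable homomorphism $\mu$ whose existence is guaranteed by Remarks~\ref{general.nonCMsmoothing.remark} and~\ref{general.nonCMsmoothing.remark2}; since the real content has already been proved there, the argument reduces to checking that the hypotheses match up.

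For part (1), I would argue as follows. By assumption, conditions (1), (2) and (3) of Corollary~\ref{general.nonCMsmoothing.ribbons} hold; in particular (1) and (3) hold, which are exactly the standing hypotheses of Remark~\ref{general.nonCMsmoothing.remark}. Adding the extra numerical hypothesis — item (1) or item (2) of that remark — Remark~\ref{general.nonCMsmoothing.remark} produces a nonzero, non surjective homomorphism $\mu \in \mathrm{Hom}(\mathcal I/\mathcal I^2,\mathcal E)$. Since $\mu \neq 0$ and (1), (2), (3) of Corollary~\ref{general.nonCMsmoothing.ribbons} are in force, applying that corollary to this $\mu$ yields a double structure $\widetilde Y$ and a ribbon $\widehat Y$ satisfying (i), (ii) and (iii), as claimed.

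For part (2), the hypothesis that $\mathcal E\otimes\mathcal O_Y(1)$ is globally generated is precisely what Remark~\ref{general.nonCMsmoothing.remark2} requires (its standing hypotheses, items (1) and (3) of Corollary~\ref{general.nonCMsmoothing.ribbons}, are again among our assumptions), so it produces a surjective $\mu \in \mathrm{Hom}(\mathcal I/\mathcal I^2,\mathcal E)$. A surjective $\mu$ is in particular nonzero, so Corollary~\ref{general.nonCMsmoothing.ribbons} applies and gives $\widetilde Y$, $\widehat Y$ and a family as in (i), (ii), (iii). Now $\mathcal E' = \mathrm{im}\,\mu = \mathcal E$, so by Observation~\ref{nonCMremark} one has $\widetilde Y = \widehat Y$ and $\widetilde Y$ is a ribbon with conormal bundle $\mathcal E$; its arithmetic genus is $\tilde g = g - \chi(\mathcal E) = g - (\deg\mathcal E + 1 - g) = 2g + e - 1 = e + 2g - 1$ by Riemann--Roch on $Y$. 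This yields (i) and (iii) with the asserted genus.

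The only point requiring any care — and it is purely bookkeeping — is to verify that the standing hypotheses of the two remarks (items (1) and (3) of Corollary~\ref{general.nonCMsmoothing.ribbons}) are indeed contained in what is being assumed, so that the remarks may be invoked, and to note that both a nonzero non surjective $\mu$ and a surjective $\mu$ are in particular nonzero, so that Corollary~\ref{general.nonCMsmoothing.ribbons} is applicable. There is no genuine obstacle here; the statement is a direct corollary, as already indicated in the text preceding it.
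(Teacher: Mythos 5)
Your argument is correct and is exactly the "straightforward" deduction the paper intends: the paper omits the proof, noting only that the proposition follows from Corollary~\ref{general.nonCMsmoothing.ribbons} together with Remarks~\ref{general.nonCMsmoothing.remark} and~\ref{general.nonCMsmoothing.remark2}, which is precisely the bookkeeping you carry out (including the use of Observation~\ref{nonCMremark} and the Riemann--Roch computation $\tilde g = g-\chi(\mathcal E)=e+2g-1$ in part (2)). No issues.
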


\begin{proposition}\label{nonCM.numerical.2}
Let
$h^1(\mathcal O_Y(1))=0$ and let $\tilde \gamma$ be an integer.
\begin{enumerate}
 \item If
\begin{enumerate}
\item[(1.1)]$2 \leq \tilde \gamma \leq \textrm{min}(d-2,2d-r)$ when $g=0$; or
\item[(1.2)] $\frac{5g-1}{2} \leq \tilde \gamma \leq \textrm{min}(d,2d-r)$
when $g \geq 1$,
\end{enumerate}
then there exist
\begin{enumerate}
\item[(i)] locally non Cohen--Macaulay double structures $\widetilde Y$
supported
on $Y$ of arithmetic genus $\tilde \gamma$,
embedded in $\mathbf P^r$;
\item[(ii)] for some  $\hat
\gamma > \tilde \gamma$, ribbons
$\widehat Y$of arithmetic genus $\hat \gamma$  supported on $Y$   and contained
in $\widetilde Y$.
\end{enumerate}
\item If
\begin{enumerate}
\item[(2.1)] $2 \leq \tilde \gamma \leq \textrm{min}(d-1,2d-r)$ when $
0 \leq g \leq 1$; or
\item[(2.2)] $\frac{5g-1}{2} \leq \tilde \gamma \leq \textrm{min}(d,2d-r)$
when $g \geq 2$,
\end{enumerate}
then there
exist ribbons $\widetilde Y$ on $Y$ of
arithmetic genus $\tilde \gamma$, embedded in $\mathbf P^r$.
\end{enumerate}

\smallskip

\noindent Moreover, under hypothesis (1) or (2), there exist
a smooth irreducible algebraic curve $T$
with a distinguished closed point $0$, a flat family $\mathcal C$ over $T$ and
a
$T$--morphism
$\Phi: \mathcal C \longrightarrow \mathbf P^{\tilde g}_{T}$ such that
\begin{enumerate}
 \item[(a)] $\mathcal C_t$ is a smooth irreducible curve of genus $\tilde
\gamma$,
\item[(b)] $\Phi_t$ is an embedding
for all $t \neq 0$, and
\item[(c)] $\Phi_0$ is of degree $2$ onto $Y$ and $[\Phi(\mathcal
X)]_0=\widetilde Y$
\end{enumerate}
(in particular, $\widetilde Y$ is smoothable in $\mathbf
P^r$).
\end{proposition}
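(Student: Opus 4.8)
Proposition~\ref{nonCM.numerical.2} is a purely numerical repackaging of Corollary~\ref{general.nonCMsmoothing.ribbons} together with Remarks~\ref{general.nonCMsmoothing.remark} and~\ref{general.nonCMsmoothing.remark2}. The strategy is: given a target arithmetic genus $\tilde\gamma$ obeying the stated inequalities, produce a line bundle $\SE$ of suitable degree $-e$ so that (a) $\chi(\SE)$ is adjusted to make $\tilde g = g - \chi(\SE) = \tilde\gamma$ (since $\SE$ is a line bundle, $\chi(\SE) = -e + 1 - g$, so $\tilde g = 2g - 1 + e$, and choosing $e = \tilde\gamma - 2g + 1$ is forced); and (b) the hypotheses (1), (2), (3) of Corollary~\ref{general.nonCMsmoothing.ribbons} together with the extra numerical hypotheses of the relevant Remark all hold for that $\SE$. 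Then the double structure $\widetilde Y$, the ribbon $\widehat Y$, and the smoothing family $(\mathcal C, \Phi, T)$ come directly from Corollary~\ref{general.nonCMsmoothing.ribbons}, resp.\ Proposition~\ref{general.nonCMsmoothing.corollary}.

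\textbf{Execution of part (1).} Fix $e = \tilde\gamma - 2g + 1$; the inequalities on $\tilde\gamma$ translate into inequalities on $e$. Pick $\SE$ to be a \emph{general} line bundle of degree $-e$ on $Y$ (a general divisor class of the right degree), so that, when $e>0$, $|\SE^{-2}|$ has degree $2e \geq 1$ and contains a reduced effective divisor; when $g=0$ generality is automatic, and when $g \ge 1$ a general line bundle of positive degree $2e$ is base-point free once $2e \geq 2g+1$, and in any case for $2e \le 2g$ one can still take a smooth (i.e.\ reduced) member of $|\SE^{-2}|$ as soon as it is non-empty, which is part of what the inequalities on $\tilde\gamma$ must guarantee — this gives hypothesis (1). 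Hypothesis (3), $h^1(\SE \otimes \SO_Y(1)) = 0$, holds because $\SE\otimes\SO_Y(1)$ has degree $d - e$ which, under the bounds (e.g.\ $\tilde\gamma \le d$ forces $d - e = d - \tilde\gamma + 2g - 1 \geq 2g - 1$), is large enough that a general line bundle of that degree is nonspecial; combined with the given $h^1(\SO_Y(1)) = 0$ this is (3). Hypothesis (2) becomes $h^0(\SO_Y(1)) + h^0(\SE\otimes\SO_Y(1)) = (d - g + 1) + (d - e - g + 1) \ge r+1$, which by Riemann--Roch (using $h^1 = 0$ from (3) and linear normality from $h^1(\SO_Y(1))=0$) is exactly the condition $\tilde\gamma \le 2d - r$ appearing in the hypotheses. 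With (1), (2), (3) in hand, Corollary~\ref{general.nonCMsmoothing.ribbons} is applicable provided we also exhibit a nonzero $\mu$; the remaining inequality (the lower bound $\tilde\gamma \ge 2$ for $g=0$, resp.\ $\tilde\gamma \ge \frac{5g-1}{2}$ for $g\ge 1$) is precisely what makes the numerical hypotheses (1)/(2) of Remark~\ref{general.nonCMsmoothing.remark} hold — e.g.\ $d - e \ge g$ unwinds to $\tilde\gamma \ge 3g - 1$, and one checks $\frac{5g-1}{2} \ge 3g-1$ iff $g\le 1$, so for $g \ge 1$ the bound $\frac{5g-1}{2}$ is the binding one — hence a nonzero non surjective $\mu$ exists and in particular the double structure produced by Corollary~\ref{general.nonCMsmoothing.ribbons}(i) is locally non Cohen--Macaulay (it is not a ribbon because $\mu$ is not surjective, cf.\ Observation~\ref{nonCMremark}), with $\widehat Y$ of genus $\hat\gamma = g - \chi(\SE') > g - \chi(\SE) = \tilde\gamma$ since $\SE' \subsetneq \SE$. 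This gives (i) and (ii), and the smoothing family is (iii) of Corollary~\ref{general.nonCMsmoothing.ribbons} verbatim.

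\textbf{Execution of part (2) and the final clause.} For part (2) the argument is the same but one wants instead a \emph{surjective} $\mu$, so one invokes Remark~\ref{general.nonCMsmoothing.remark2}: it suffices that $\SE\otimes\SO_Y(1)$ be globally generated, which for a general line bundle of degree $d-e$ holds once $d - e \ge g+1$ (equivalently $\tilde\gamma \le 3g$... — rather, unwinding, $d - e = d - \tilde\gamma + 2g - 1 \ge g+1$ iff $\tilde\gamma \le d + g - 2$, which is implied by $\tilde\gamma \le d-1$ when $g \ge 1$, and handled separately for $g=0,1$ where the bound is $d-1$). By Proposition~\ref{general.nonCMsmoothing.corollary}(2) this yields a \emph{ribbon} $\widetilde Y$ with conormal bundle $\SE$, of arithmetic genus $e + 2g - 1 = \tilde\gamma$, embedded in $\mathbf P^r$, together with the smoothing family. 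In both cases the ``moreover'' clause is nothing new: the family $(T, 0, \mathcal C, \Phi)$ with properties (a), (b), (c) is literally part (iii) of Corollary~\ref{general.nonCMsmoothing.ribbons} (note $\tilde g = \tilde\gamma$ by construction, so $\mathbf P^{\tilde g}_T = \mathbf P^{\tilde\gamma}_T$), and $\widetilde Y$ is smoothable.

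\textbf{The main obstacle.} The only real work is bookkeeping: verifying that each numerical range on $\tilde\gamma$ in (1.1), (1.2), (2.1), (2.2) is exactly equivalent, after the substitution $e = \tilde\gamma - 2g + 1$, to the conjunction of (i) a positivity/non-emptiness condition on $|\SE^{-2}|$ needed for a smooth branch divisor, (ii) non-speciality of $\SE\otimes\SO_Y(1)$, (iii) the $h^0$-count $\ge r+1$, and (iv) the existence of the appropriate (surjective vs.\ non-surjective) $\mu$. The delicate endpoint is the low-genus interface (the switch between the $g=0$, $0\le g\le 1$, $g\ge 1$, $g\ge 2$ cases), where which of the two competing lower bounds on $e$ is binding changes; one must also be slightly careful that ``smooth effective divisor in $|\SE^{-2}|$'' genuinely needs only \emph{reducedness} of a member, not base-point-freeness, so the condition is just non-emptiness plus a general-position argument, which is what keeps the lower bound as low as $2$ in the rational and elliptic cases. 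None of this requires new geometry — it is a direct consequence of the already-proven Corollary~\ref{general.nonCMsmoothing.ribbons} and Remarks~\ref{general.nonCMsmoothing.remark}--\ref{general.nonCMsmoothing.remark2}, as the paper itself signals by calling the proof ``straightforward''.
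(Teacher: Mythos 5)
Your overall strategy is exactly the paper's: substitute $e=\tilde\gamma-2g+1$, choose a suitable line bundle $\mathcal E$ of degree $-e$, check hypotheses (1)--(3) of Corollary~\ref{general.nonCMsmoothing.ribbons} together with the numerical conditions of Remarks~\ref{general.nonCMsmoothing.remark} and~\ref{general.nonCMsmoothing.remark2}, and conclude via Proposition~\ref{general.nonCMsmoothing.corollary}. But the bookkeeping, which you yourself identify as the entire content of the proof, goes wrong at the one point where the lower bound on $\tilde\gamma$ is actually used. First, the condition $d-e\geq g$ of Remark~\ref{general.nonCMsmoothing.remark}(2) does not unwind to $\tilde\gamma\geq 3g-1$: since $d-e=d-\tilde\gamma+2g-1$, it unwinds to the \emph{upper} bound $\tilde\gamma\leq d+g-1$, which for $g\geq 1$ is already implied by $\tilde\gamma\leq d$. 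Your ensuing comparison of $\frac{5g-1}{2}$ with $3g-1$ is therefore moot (and note that $\frac{5g-1}{2}\leq 3g-1$ for $g\geq 1$, so if $3g-1$ really had been required, your stated hypotheses would not have sufficed). For $g\geq 1$ the Remark imposes no lower bound at all; only in the $g=0$ case does the lower bound $\tilde\gamma\geq 2$ come from the Remark (via $e\geq 3$).

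Second, and more seriously, you never correctly establish hypothesis (1) of Corollary~\ref{general.nonCMsmoothing.ribbons}. Your fallback claim that for $2e\leq 2g$ one can take a smooth (reduced) member of $|\mathcal E^{-2}|$ ``as soon as it is non-empty'' is false as a general principle: a non-empty linear system of low degree on a curve of positive genus can consist entirely of non-reduced divisors (e.g.\ $|2p|=\{2p\}$ for a general point $p$ on a curve of genus $\geq 2$). The actual role of the lower bound $\tilde\gamma\geq\frac{5g-1}{2}$ for $g\geq 1$ --- which you instead assign to the Remark --- is precisely to give $2e\geq g+1$, so that a general line bundle of degree $2e$ is base-point free and Bertini yields a smooth, effective, non-empty divisor in $|\mathcal E^{-2}|$; this is the paper's (one-line) argument, and it is the step your write-up leaves genuinely unjustified in the range $g+1\leq 2e\leq 2g$. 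Everything else in your proposal (the translations $\tilde\gamma\leq d\Leftrightarrow h^1(\mathcal E\otimes\mathcal O_Y(1))=0$, $\tilde\gamma\leq 2d-r\Leftrightarrow$ the $h^0$ count, the use of Observation~\ref{nonCMremark} to get non--Cohen--Macaulayness in part (1), and the global generation count in part (2)) matches the paper and is correct.
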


\begin{proof}
We prove (1) first. Let $\epsilon=\tilde \gamma-2g+1$.  The lower bounds on
$\tilde \gamma$ assumed in
(1.1) and (1.2) imply
that $2\epsilon \geq g  + 1$. Thus
we may choose a non trivial line bundle $\mathcal E$ on $Y$ of degree
$-\epsilon$, for which $\epsilon=e$ and $\tilde \gamma=\tilde g$, and
such that $|\mathcal E^{-2}|$ possesses a smooth, effective, non empty divisor,
so  (1)
of Corollary~\ref{general.nonCMsmoothing.ribbons} is satisfied.
By hypothesis, $h^1(\mathcal O_Y(1))=0$.
In addition, the upper bounds in
(1.1) and (1.2) imply $d \geq \tilde g$; this is equivalent to
$d - e \geq
2g -1$, so $h^1(\mathcal E \otimes \mathcal O_Y(1))=0$ and $\mathcal E$
satisfies (3)
of Corollary~\ref{general.nonCMsmoothing.ribbons}. On the other hand,
(1.1)  and (1.2) imply $\tilde g \leq 2d - r$, which is the same as
$\mathcal E$
satisfying hypothesis (2)
of Corollary~\ref{general.nonCMsmoothing.ribbons}.
Finally (1.1) and (1.2) also imply (1) and (2) of
Remark~\ref{general.nonCMsmoothing.remark}, so the thesis of (1) follows from
Proposition~\ref{general.nonCMsmoothing.corollary}, (1).

\smallskip

\noindent Now we prove (2). Let again $\epsilon=\tilde
\gamma-2g+1$. The lower bounds assumed
for $\tilde \gamma$ on (2.1) and (2.2) allow us (as (1.1) and (1.2) did before)
to choose a
non trivial line bundle $\mathcal E$ on $Y$ of degree $-\epsilon$, for which
$\epsilon=e$ and $\tilde \gamma=\tilde g$, and such that $|\mathcal E^{-2}|$
possesses a smooth, effective, non empty divisor
$B$. Thus  (1)
of Corollary~\ref{general.nonCMsmoothing.ribbons} is satisfied.  Moreover, (2.1)
and (2.2) imply that
$d -e \geq 0$ if $g =0$ and $d -e \geq g + 1$ otherwise, so
we may further assume $\mathcal E \otimes \mathcal O_Y(1)$ to be globally
generated, thus satisfying the hypothesis of
Remark~\ref{general.nonCMsmoothing.remark2}. Finally,  (2.1) and (2.2) also
imply  hypotheses (2) and (3) of Corollary~\ref{general.nonCMsmoothing.ribbons},
so
the thesis of (2) follows from
Proposition~\ref{general.nonCMsmoothing.corollary}, (2).
\end{proof}

\section{Double structures on rational normal curves}\label{rational}

\noindent In this section we study smoothable double structures supported on
rational normal curves. For this purpose we apply
Theorem~\ref{general.nonCMsmoothing} (or, rather,
Corollary~\ref{general.nonCMsmoothing.ribbons}) to obtain
Theorem~\ref{nonCMsmoothing}. The most well known example of smoothable
double structures on rational normal curves are canonical ribbons.
Among other things, Theorem~\ref{nonCMsmoothing} says that, in addition to
canonical ribbons, in $\mathbf P^3$ or in projective spaces of higher dimension
there exist smoothable double structures, both locally Cohen--Macaulay and  non
Cohen--Macaulay, supported on rational normal curves,
of any arithmetic genus and of any degree in the non special range. In this
particular case of $Y$ being a rational normal curve,
Theorem~\ref{nonCMsmoothing} is much stronger than
Proposition~\ref{nonCM.numerical.2}.

\begin{theorem}\label{nonCMsmoothing}
Let $\tilde \gamma$
be a non negative integer and let $Y$ be a smooth rational normal
curve of
degree $d$  in $\mathbf P^d$ (i.e, $d=r$ in this case), where either
$d=\tilde \gamma-1$ and $\tilde \gamma \geq 3$ or $d \geq
\textrm{max\,}(\tilde \gamma,3)$.
For any integer $\hat \gamma$ such that $\tilde \gamma \leq \hat \gamma \leq
d+1$ there exist
\begin{enumerate}
\item[(i)]  double structures $\widetilde Y$  supported
on $Y$ of arithmetic genus $\tilde \gamma$,
embedded in $\mathbf P^d$;
\item[(ii)] ribbons
$\widehat Y$, supported on $Y$  of arithmetic genus $\hat \gamma$, contained
in $\widetilde Y$;
\item[(iii)]
a smooth irreducible algebraic curve $T$
with a distinguished closed point $0$, a flat family $\mathcal C$ over $T$ and
a
$T$--morphism
$\Phi: \mathcal C \longrightarrow \mathbf P^{\tilde g}_{T}$ such that
\begin{enumerate}
\item[(a)] $\mathcal C_t$ is a smooth irreducible curve of genus $\tilde
\gamma$,
\item[(b)] $\Phi_t$ is an embedding
for all $t \neq 0$, and
\item[(c)] $\Phi_0$ is of degree $2$ onto $Y$ and
$[\Phi(\mathcal
X)]_0=\widetilde Y$
\end{enumerate}
(in particular, $\widetilde Y$ is smoothable in $\mathbf
P^r$).
\end{enumerate}
These double structures $\widetilde Y$ are locally Cohen--Macaulay if and only
if $\hat \gamma=\tilde \gamma$. If $\hat \gamma  > \tilde \gamma$, then
$\widetilde Y$ contains $\widehat Y$ and
some embedded points.
\end{theorem}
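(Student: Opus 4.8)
The plan is to derive Theorem~\ref{nonCMsmoothing} by specializing Corollary~\ref{general.nonCMsmoothing.ribbons} to the case where $Y$ is a rational normal curve of degree $d$ in $\mathbf P^d$, exploiting the extra structure available in this situation: $g=0$, $\mathcal O_Y(1)=\mathcal O_{\mathbf P^1}(d)$, and every line bundle on $Y=\mathbf P^1$ is determined by its degree. The key point is that for a rational normal curve the three hypotheses of Corollary~\ref{general.nonCMsmoothing.ribbons} become transparent numerical constraints, and moreover, because we have complete freedom in choosing the line bundle $\mathcal E$ and the homomorphism $\mu$, we can control the arithmetic genera $\tilde g=\tilde\gamma$ and $\hat g=\hat\gamma$ independently within the asserted ranges.

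\medskip

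First I would set up the parameters. Given $\tilde\gamma$ and $\hat\gamma$ with $\tilde\gamma\le\hat\gamma\le d+1$, I put $\mathcal E=\mathcal O_{\mathbf P^1}(-e)$ where $e$ is chosen so that $\tilde g=g-\chi(\mathcal E)=0-(1-e)=e-1$ equals $\tilde\gamma$, i.e.\ $e=\tilde\gamma+1$; then I would choose $\mathcal E'=\mathcal O_{\mathbf P^1}(-e')$ with $e'=\hat\gamma+1$, so that $\hat g=e'-1=\hat\gamma$ and $e'\le e$ (which holds since $\hat\gamma\ge\tilde\gamma$ forces no... wait, $\hat\gamma\ge\tilde\gamma$ gives $e'\ge e$; one must be careful about which of $\tilde\gamma,\hat\gamma$ is larger — the statement has $\hat\gamma\ge\tilde\gamma$ but Observation~\ref{nonCMremark} shows $\tilde g<\hat g$ forces embedded points, consistent with $\hat\gamma>\tilde\gamma$ and $\mathcal E'\subsetneq$... actually the inclusion $\widehat Y\subseteq\widetilde Y$ of ropes with $\hat g>\tilde g$ is correct). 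I would then verify the hypotheses of Corollary~\ref{general.nonCMsmoothing.ribbons}: condition (1), that $|\mathcal E^{-2}|=|\mathcal O_{\mathbf P^1}(2e)|$ has a smooth effective nonempty divisor, is automatic since $2e>0$ and any reduced divisor of $2e$ distinct points works; condition (3), that $h^1(\mathcal O_Y(1))=h^1(\mathcal O_{\mathbf P^1}(d))=0$ and $h^1(\mathcal E\otimes\mathcal O_Y(1))=h^1(\mathcal O_{\mathbf P^1}(d-e))=0$, holds precisely when $d-e\ge-1$, i.e.\ $d\ge\tilde\gamma$, which is exactly the hypothesis $d\ge\max(\tilde\gamma,3)$ (the case $d=\tilde\gamma-1$ will need separate treatment, see below); condition (2), $h^0(\mathcal O_Y(1))+h^0(\mathcal E\otimes\mathcal O_Y(1))\ge r+1=d+1$, becomes $(d+1)+\max(d-e+1,0)\ge d+1$, which is trivially true. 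So the bulk of the work is just matching inequalities.

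\medskip

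The nontrivial part is (a) producing a homomorphism $\mu\in\mathrm{Hom}(\mathcal I/\mathcal I^2,\mathcal E)$ whose image is exactly $\mathcal E'=\mathcal O_{\mathbf P^1}(-e')$ — not merely nonzero — so that the rope $\widehat Y$ has the prescribed genus $\hat\gamma$; and (b) handling the boundary case $d=\tilde\gamma-1$, where $h^1(\mathcal E\otimes\mathcal O_Y(1))$ need not vanish and one cannot directly invoke the corollary. For (a), I would use the diagram~\eqref{N-F}: a section of $H^0(\mathcal N_{Y,\mathbf P^d}\otimes\mathcal E)$ arises from a section of $\mathcal E\otimes\mathcal O_Y(1)=\mathcal O_{\mathbf P^1}(d-e)$, and the vanishing locus of that section — a divisor of $d-e$ points — controls the degree-drop from $\mathcal E$ to the image $\mathcal E'$; choosing the section to vanish on $\deg=e-e'=\tilde\gamma-\hat\gamma$... but since $\hat\gamma\ge\tilde\gamma$ this is $\le0$, so in fact $\mathcal E'$ should be a \emph{quotient} line bundle of larger degree, meaning $\mu$ factors as $\mathcal I/\mathcal I^2\twoheadrightarrow\mathcal E'\hookrightarrow\mathcal E$ — one must identify $\mathcal E'$ as the subsheaf of $\mathcal E$ generated by the image, which on $\mathbf P^1$ is a line bundle $\mathcal O(-e')$ with $e'\ge e$ determined by the section's zeros. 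The hard part will be bookkeeping that every value $e'\in[e,d+1+1]$, i.e.\ every $\hat\gamma\in[\tilde\gamma,d+1]$, is realized; this should follow because the section in $H^0(\mathcal O_{\mathbf P^1}(d-e))$ can be chosen with any number of zeros from $0$ up to $d-e$, giving $\hat\gamma$ ranging over $[\tilde\gamma,\tilde\gamma+(d-e)]=[\tilde\gamma,d+1]$ after using $e=\tilde\gamma+1$ — wait, $\tilde\gamma+(d-e)=\tilde\gamma+d-\tilde\gamma-1=d-1$, not $d+1$; I would need to re-examine whether the true upper bound is $d-1$ or whether the extra freedom (allowing $\mathcal E\otimes\mathcal O_Y(1)$ itself to be replaced, or using that $\mathcal I/\mathcal I^2$ is a sum) pushes it to $d+1$. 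This discrepancy is the point I expect to be delicate and where I would spend the most care.

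\medskip

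For the boundary case $d=\tilde\gamma-1$ (so $e=d+2$, $d-e=-2$), Corollary~\ref{general.nonCMsmoothing.ribbons} does not apply directly since $h^1(\mathcal E\otimes\mathcal O_Y(1))=h^1(\mathcal O_{\mathbf P^1}(-2))=1\ne0$. Here I would instead construct the double structure by hand or appeal to the fact that this is precisely the canonical-ribbon borderline: a ribbon of arithmetic genus $\tilde\gamma=d+1$ on the rational normal curve $Y\subset\mathbf P^d$ with conormal bundle $\mathcal O_{\mathbf P^1}(-(d+2))$ has canonical bundle restricting correctly, and such ribbons are classically known to be smoothable (Fong, Bayer--Eisenbud); alternatively, one checks directly that hypothesis (2) of Theorem~\ref{general.nonCMsmoothing} still holds — $h^0(\mathcal O_Y(1))=d+1\ge r+1=d+1$ — and that a degree-$2$ cover $\pi\colon C\to\mathbf P^1$ with trace-zero module $\mathcal O_{\mathbf P^1}(-(d+2))$ exists (branched at $2d+4$ points, giving $C$ of genus $d+1=\tilde\gamma$), and then runs the same argument via \cite[Theorem 1.1]{GGPropes} as in the proof of Theorem~\ref{general.nonCMsmoothing}, noting that for $\mathbf P^1$ one only needs $h^0$ of the relevant line bundles to be large enough, not $h^1$ to vanish. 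Finally, the last two sentences of the statement — that $\widetilde Y$ is locally Cohen--Macaulay iff $\hat\gamma=\tilde\gamma$, and otherwise contains $\widehat Y$ plus embedded points — are immediate from Observation~\ref{nonCMremark} applied with $n=2$, since locally Cohen--Macaulay double structures are ribbons and ribbons are ropes of multiplicity $2$.
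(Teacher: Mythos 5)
Your overall strategy coincides with the paper's: reduce to Corollary~\ref{general.nonCMsmoothing.ribbons} with $\mathcal E=\mathcal O_{\mathbf P^1}(-\tilde\gamma-1)$ when $d\geq\max(\tilde\gamma,3)$, verify hypotheses (1)--(3) numerically, treat the boundary case $d=\tilde\gamma-1$ separately as the canonical-ribbon case (where indeed $\hat\gamma=\tilde\gamma$ is forced and one quotes Fong's smoothing of canonical ribbons by canonical curves degenerating to the hyperelliptic locus), and deduce the final dichotomy from Observation~\ref{nonCMremark}. All of that matches. But there is one genuine gap, precisely at the point you flag yourself: producing, for \emph{every} $\hat\gamma$ with $\tilde\gamma\leq\hat\gamma\leq d+1$, a homomorphism $\mu\in\mathrm{Hom}(\mathcal I/\mathcal I^2,\mathcal E)$ whose image is $\mathcal O_{\mathbf P^1}(-\hat\gamma-1)$. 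Your construction feeds a single section of $\mathcal E\otimes\mathcal O_Y(1)=\mathcal O_{\mathbf P^1}(d-\tilde\gamma-1)$ through the middle row of diagram~\eqref{N-F}; the common-zero divisor of the resulting $\mu$ then has degree at most $d-\tilde\gamma-1$, which caps $\hat\gamma$ at $d-1$ and leaves the values $\hat\gamma=d$ and $\hat\gamma=d+1$ unreached. You correctly suspect the resolution lies in using the full conormal bundle rather than the Euler-sequence sub-line-bundle, but you do not carry it out, and without it the statement as claimed is not proved.

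The paper closes this by working directly with the splitting $\mathcal I/\mathcal I^2\simeq\mathcal O_{\mathbf P^1}(-d-2)^{\oplus d-1}$. A homomorphism to $\mathcal E=\mathcal O_{\mathbf P^1}(-\tilde\gamma-1)$ with image the subsheaf $\mathcal O_{\mathbf P^1}(-\hat\gamma-1)$ is obtained as the composite of a surjection $\mathcal O_{\mathbf P^1}(-d-2)^{\oplus d-1}\twoheadrightarrow\mathcal O_{\mathbf P^1}(-\hat\gamma-1)$, given by $d-1$ sections of $\mathcal O_{\mathbf P^1}(d-\hat\gamma+1)$ with no common zero, followed by an inclusion $\mathcal O_{\mathbf P^1}(-\hat\gamma-1)\hookrightarrow\mathcal O_{\mathbf P^1}(-\tilde\gamma-1)$ given by a nonzero section of $\mathcal O_{\mathbf P^1}(\hat\gamma-\tilde\gamma)$. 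Such base-point-free collections exist exactly when $d-\hat\gamma+1\geq 0$ (using $d-1\geq 2$), i.e.\ when $\hat\gamma\leq d+1$, which is how the true upper bound $d+1$ arises; the degree-$(d+2)$ twist of the conormal summands, rather than the degree-$d$ twist of $\mathcal O_Y(1)$, is what buys the extra two units. A secondary caution: your fallback suggestion for the case $d=\tilde\gamma-1$ (rerunning the argument of Theorem~\ref{general.nonCMsmoothing} ``needing only $h^0$ conditions'') would not work as stated, since the cited smoothing criterion of \cite{GGPropes} genuinely uses the vanishing of $h^1$; stick with the canonical-ribbon argument, which is what the paper does.
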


\begin{proof}
It is well known
that the conormal bundle of $Y$ inside $\mathbf P^d$ is
\begin{equation}\label{conormalPd}
\mathcal I/\mathcal I^2 \simeq \mathcal O_{\mathbf P^1}(-d-2)^{\oplus d-1}.
\end{equation}

\medskip
\noindent
If $d=\tilde \gamma -1$, then $\hat \gamma=\tilde \gamma \geq 3$ and $Y$ has
degree
$\tilde \gamma-1$ in $\mathbf P^{\tilde \gamma-1}$. Then
\begin{equation*}
 \mathcal I/\mathcal I^2 \simeq \mathcal O_{\mathbf P^1}(-\tilde
\gamma-1)^{\oplus \tilde \gamma -2}.
\end{equation*}
Thus there are double structures  $\widetilde Y$ on $Y$ with arithmetic genus
$\tilde \gamma$ and these are necessarily canonical ribbons. It is
well known (see e.g.~\cite{Fong}) that a canonical ribbon $\widetilde Y$ is the
flat limit of smooth canonical curves that approach the hyperelliptic locus and
that, if we set the family of morphisms $\Phi$ in (iii) to be the relative
canonical morphism of such family of curves, then $[\Phi(\mathcal
X)]_0=\widetilde Y$. This proves the result when $d=\tilde \gamma-1$.

\medskip
\noindent
Now assume $d \geq \tilde \gamma$, $d \geq 3$ and fix an integer $\hat \gamma$
such that $\tilde \gamma \leq \hat \gamma \leq
d+1$. Set $\mathcal E \simeq \mathcal O_{\mathbf P^1}(-\tilde \gamma-1)$ (thus
$e=\tilde \gamma+1$  and $\tilde g=\tilde \gamma$).
We claim that $\mathcal E$ satisfies (1), (2)  and (3) of
Corollary~\ref{general.nonCMsmoothing.ribbons}. Indeed, $\mathcal E^{-2}$ has
positive
degree, so $|\mathcal E^{-2}|$ possesses a smooth, effective, non empty
divisor, so (1)
of Corollary~\ref{general.nonCMsmoothing} is satisfied.
On the other hand,  $\mathcal O_Y(1)=\mathcal O_{\mathbf P^1}(d)$ and $\mathcal
E \otimes
\mathcal O_Y(1)=\mathcal O_{\mathbf P^1}(d-\tilde g-1)$, so (3) of
Corollary~\ref{general.nonCMsmoothing.ribbons} holds and
\begin{equation*}
h^0(\mathcal O_Y(1)) + h^0(\mathcal E \otimes \mathcal O_Y(1)) = 2d-\tilde g+1
\geq
d+1=r+1,
\end{equation*}
so (2) of Corollary~\ref{general.nonCMsmoothing.ribbons} also holds.
Finally, since $\hat \gamma \leq d+1$ and $d \geq 3$, there exist $d-1$ global
sections of
$\mathcal O_{\mathbf P^1}(d-\hat \gamma +1)$ which do not vanish simultaneously
at any
given point of $\mathbf P^1$. Then \eqref{conormalPd} yields the existence of a
nonzero homomorphism $\mu \in \textrm{Hom}(\mathcal I/\mathcal I^2,\mathcal E)$
whose image is the line subbundle $\mathcal O_{\mathbf P^1}(-\hat \gamma-1)$,
which we will call $\mathcal
E'$ (thus $\hat g=\hat \gamma$).
follows from Corollary~\ref{general.nonCMsmoothing.ribbons} and
Observation~\ref{nonCMremark}.
\end{proof}

\section{Double structures on elliptic normal curves}\label{elliptic.section}

\noindent In this section we apply Theorem~\ref{general.nonCMsmoothing} (or,
rather, Corollary~\ref{general.nonCMsmoothing.ribbons})  to
study smoothable  double structures on elliptic normal curves. Precisely, in
Theorems~\ref{elliptic} and \ref{elliptic2}  we show the existence of
smoothable double structures, both locally Cohen--Macaulay and non
Cohen--Macaulay, on elliptic normal curves in $\mathbf P^3$ or
$\mathbf P^4$. Theorems~\ref{elliptic} and \ref{elliptic2} do not follow from
Proposition~\ref{nonCM.numerical.2}.

\begin{theorem}\label{elliptic}
Let $Y$ be a smooth elliptic normal curve of degree $4$ in $\mathbf P^3$.
\begin{enumerate}
 \item If
$\mathcal E$ is a line bundle of degree $-4$ on $Y$ such that $\mathcal
E^{-1} \neq \mathcal O_Y(1)$,  then there are nonzero elements $\mu$ of
Hom$(\mathcal I/\mathcal I^2, \mathcal E)$.
For each one of these $\mu$, there
exist
\begin{enumerate}
\item[(i)] a double structure $\widetilde Y$  supported
on $Y$ of arithmetic genus
$5$,
embedded in $\mathbf P^3$;
\item[(ii)] a ribbon
$\widehat Y$, supported on $Y$  of arithmetic genus $\hat g$ and with
conormal
bundle $\mathcal E'$ ($\mathcal E'=\mathrm{im } \, \mu$), contained in
$\widetilde Y$; and
\item[(iii)]
a smooth irreducible algebraic curve $T$
with a distinguished closed point $0$, a flat family $\mathcal C$ over $T$ and
a
$T$--morphism
$\Phi: \mathcal C \longrightarrow \mathbf P^{3}_{T}$ such that
\begin{enumerate}
\item[(a)] $\mathcal C_t$ is a smooth irreducible curve of genus $5$,
\item[(b)] $\Phi_t$ is an embedding
for all $t \neq 0$, and
\item[(c)] $\Phi_0$ is of degree $2$ onto $Y$ and
$[\Phi(\mathcal
X)]_0=\widetilde Y$
\end{enumerate}
(in particular, $\widetilde Y$ is smoothable in $\mathbf
P^3$).
\end{enumerate}

\smallskip

\noindent Moreover, there are ribbons $\widehat Y$ as in (i), (ii), (iii), of
arithmetic genus
$5, 6, 7$ and $9$ and with conormal bundle $\mathcal E''$, for any line
bundle $\mathcal E''$ of degree $-5$ or $-6$ or satisfying $\mathcal
E''=\mathcal
E$ or satisfying
$\mathcal E'' = \mathcal O_Y(-2)$. Conversely, if $\widetilde Y$ and $\widehat
Y$ are as in (i), (ii), then $\hat g=5, 6, 7$ or $9$.

\smallskip

\noindent The double
structures $\widetilde Y$ above  are ribbons  if and
only
its arithmetic genus is $5$. In
contrast, if $\hat g =6, 7$ or $9$, then $\widetilde Y$ has
embedded points.

\smallskip

\item If
$\mathcal E$ is a line bundle on $Y$ such that $\mathcal
E^{-1} = \mathcal O_Y(1)$, then there are surjective elements $\mu$ of
Hom$(\mathcal I/\mathcal I^2, \mathcal E)$.
For each of these $\mu$ there
exist
\begin{enumerate}
\item[(iv)] a ribbon
$\widetilde Y$, supported on $Y$  of arithmetic genus $5$ and with
conormal
bundle $\mathcal E$; and
\item[(v)]
a smooth irreducible algebraic curve $T$
with a distinguished closed point $0$, a flat family $\mathcal C$ over $T$ and
a
$T$--morphism
$\Phi: \mathcal C \longrightarrow \mathbf P^{3}_{T}$ such that
\begin{enumerate}
\item[(a)] $\mathcal C_t$ is a smooth irreducible curve of genus $5$,
\item[(b)] $\Phi_t$ is an embedding
for all $t \neq 0$, and
\item[(c)] $\Phi_0$ is of degree $2$ onto $Y$ and
$[\Phi(\mathcal
X)]_0=\widetilde Y$
\end{enumerate}
(in particular, $\widetilde Y$ is smoothable in $\mathbf
P^3$).
\end{enumerate}

\end{enumerate}

\end{theorem}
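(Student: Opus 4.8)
The plan is to derive Theorem~\ref{elliptic} directly from Corollary~\ref{general.nonCMsmoothing.ribbons} and Observation~\ref{nonCMremark}, exactly as was done for rational normal curves in Theorem~\ref{nonCMsmoothing}, the only extra work being the computation of the relevant cohomology groups on an elliptic normal quartic in $\mathbf P^3$ and the identification of which line subbundles of $\mathcal I/\mathcal I^2$ can arise as images of homomorphisms $\mu$. First I would recall that for $Y$ an elliptic normal curve of degree $4$ in $\mathbf P^3$ one has $g=1$, $d=4$, $r=3$, $\mathcal O_Y(1)$ a line bundle of degree $4$, and $h^1(\mathcal O_Y(1))=0$; one also has the conormal bundle $\mathcal I/\mathcal I^2$, which for the elliptic quartic (a complete intersection of two quadrics) is $\mathcal O_Y(-2)\oplus\mathcal O_Y(-2)$ with $\mathcal O_Y(-2)$ of degree $-8$. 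This last identification is the computational heart of part (1): knowing $\mathcal I/\mathcal I^2\simeq\mathcal O_Y(-2)^{\oplus 2}$ lets me describe $\mathrm{Hom}(\mathcal I/\mathcal I^2,\mathcal E)=H^0(\mathcal E(2))^{\oplus 2}$ for any line bundle $\mathcal E$, and an element $\mu=(s_1,s_2)$ has image a line subbundle of $\mathcal E$; generically (when $s_1,s_2$ have no common zero, or one of them vanishes) that image is $\mathcal E$ itself, and when $s_1,s_2$ share a divisor $D$ the image is $\mathcal E(-D)$.

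For part (1), given $\mathcal E$ of degree $-4$ with $\mathcal E^{-1}\neq\mathcal O_Y(1)$, I would check the three hypotheses of Corollary~\ref{general.nonCMsmoothing.ribbons}: (1) $\mathcal E^{-2}$ has degree $8>0$ so $|\mathcal E^{-2}|$ contains a smooth effective nonempty divisor; (3) $h^1(\mathcal O_Y(1))=0$ by hypothesis and $\mathcal E\otimes\mathcal O_Y(1)$ has degree $0$ and is nontrivial precisely because $\mathcal E^{-1}\neq\mathcal O_Y(1)$, hence $h^1=h^0=0$; and (2) $h^0(\mathcal O_Y(1))+h^0(\mathcal E(1))=4+0=4=r+1$. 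To produce nonzero $\mu$ I note $\mathrm{Hom}(\mathcal I/\mathcal I^2,\mathcal E)=H^0(\mathcal E(2))^{\oplus 2}$ and $\mathcal E(2)$ has degree $4>0$ with $h^0(\mathcal E(2))=4\neq 0$, so nonzero homomorphisms exist; Corollary~\ref{general.nonCMsmoothing.ribbons} then yields (i), (ii), (iii). The arithmetic genus of $\widetilde Y$ is $\tilde g=g-\chi(\mathcal E)=1-(\deg\mathcal E+1-g)=1-(-4+1-1)=5$. For the list of possible $\hat g$: the image $\mathcal E'$ of $\mu$ is $\mathcal E(-D)$ for an effective divisor $D$ (possibly zero) with $\deg D\le\deg\mathcal E(2)=4$ and subject to $H^0((\mathcal E(-D))(2))\neq 0$, so $\mathcal E'$ ranges over line bundles of degree $-4,-5,-6,-7,-8$; but $\mathcal E'$ must actually be realized as the image, which forces, for each candidate degree $-k$, the existence of two sections of $\mathcal E'(2)$ with no common zero when $k<8$, and $\hat g=g-\chi(\mathcal E')=1-(-k+1-1)=k+1$, giving $\hat g\in\{5,6,7,9\}$ — I would rule out $\hat g=8$ (i.e. $\mathcal E'$ of degree $-7$, so $\mathcal E'(2)$ of degree $1$) because a degree-one line bundle on an elliptic curve has a one-dimensional space of sections with a single common base point, so it cannot be globally generated and hence cannot occur as the image of $\mu$, while the globally-generatedness (equivalently, basepoint-freeness of the two chosen sections) holds for the other degrees. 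The converse statement, that any such $\widehat Y$ has $\hat g\in\{5,6,7,9\}$, is the same arithmetic run in reverse. Finally the locally-Cohen--Macaulay dichotomy and the statement about embedded points are immediate from Observation~\ref{nonCMremark}: $\widetilde Y$ is a ribbon iff $\tilde g=\hat g$, i.e. iff $\hat g=5$, and otherwise $\widetilde Y\supsetneq\widehat Y$ with embedded points.

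For part (2), $\mathcal E^{-1}=\mathcal O_Y(1)$ means $\mathcal E=\mathcal O_Y(-1)$, of degree $-4$, so $\mathcal E(1)=\mathcal O_Y$ is globally generated; I apply Remark~\ref{general.nonCMsmoothing.remark2} (its hypothesis being exactly that $\mathcal E\otimes\mathcal O_Y(1)$ be globally generated) to obtain a surjective $\mu\in\mathrm{Hom}(\mathcal I/\mathcal I^2,\mathcal E)$, and then Corollary~\ref{general.nonCMsmoothing.ribbons} together with Proposition~\ref{general.nonCMsmoothing.corollary}(2): surjectivity of $\mu$ gives $\mathcal E'=\mathcal E$, hence by Observation~\ref{nonCMremark} $\widetilde Y=\widehat Y$ is an honest ribbon with conormal bundle $\mathcal E$ and arithmetic genus $\tilde g=\hat g=5$, and assertions (iv) and (v) are precisely (i) and (iii) of the corollary specialized to this surjective case. (One still checks hypotheses (2) and (3) of the corollary, which go through exactly as in part (1), $\mathcal E(1)=\mathcal O_Y$ having $h^1=0$.)

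The main obstacle is the bookkeeping in part (1) that pins down the set $\{5,6,7,9\}$: one must be careful to distinguish between line subbundles $\mathcal E'\subseteq\mathcal E$ that are merely abstractly possible and those that genuinely occur as $\mathrm{im}\,\mu$ for some $\mu$, and the subtle point is the exclusion of degree $-7$, which hinges on the fact that on an elliptic curve a line bundle of degree $1$ is not globally generated. Everything else — the cohomology vanishing, the genus formula $\tilde g=g-\chi(\mathcal E)$, the existence of a smooth branch divisor in $|\mathcal E^{-2}|$, and the invocation of the corollary — is routine once the conormal bundle $\mathcal I/\mathcal I^2\simeq\mathcal O_Y(-2)^{\oplus 2}$ of the elliptic quartic is in hand.
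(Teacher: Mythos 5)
Your part (1) is essentially the paper's own argument: identify $Y$ as a complete intersection of two quadrics so that $\mathcal N_{Y,\mathbf P^3}=\mathcal O_Y(2)\oplus\mathcal O_Y(2)$, verify hypotheses (1)--(3) of Corollary~\ref{general.nonCMsmoothing.ribbons} (using $\mathcal E^{-1}\neq\mathcal O_Y(1)$ to get $h^1(\mathcal E\otimes\mathcal O_Y(1))=0$), compute $\tilde g=5$, exclude images of degree $\leq -9$ by negativity and of degree $-7$ because a degree-one line bundle on an elliptic curve has base points, and finish with Observation~\ref{nonCMremark}. One imprecision there: when the image has degree $-8$ (so $\hat g=9$), $\mathcal E'(2)$ has degree $0$ and admits a nowhere vanishing section only if it is trivial; that is why the theorem restricts this case to $\mathcal E''=\mathcal O_Y(-2)$, and you should say so explicitly instead of asserting global generation ``for the other degrees.''

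The genuine gap is in part (2). You claim hypothesis (3) of Corollary~\ref{general.nonCMsmoothing.ribbons} ``goes through exactly as in part (1), $\mathcal E(1)=\mathcal O_Y$ having $h^1=0$.'' But $Y$ is elliptic, so $h^1(\mathcal O_Y)=1\neq 0$: when $\mathcal E^{-1}=\mathcal O_Y(1)$ the line bundle $\mathcal E\otimes\mathcal O_Y(1)$ is trivial, hence special, and hypothesis (3) fails, so the corollary simply cannot be invoked (nor can Remark~\ref{general.nonCMsmoothing.remark2}, whose hypotheses include (3); the existence of surjective $\mu$ must instead be seen directly from two sections of $\mathcal O_Y(2)\otimes\mathcal E=\mathcal O_Y(1)$ with no common zeros, which is what the paper does). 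The paper explicitly flags this failure and proves the smoothing statement (v) by a different route: it takes the double cover $\pi:C\longrightarrow Y$ branched along a smooth divisor of $|\mathcal E^{-2}|$, lifts $\mu$ to $\nu\in H^0(\mathcal N_\varphi)$ via the surjection $\Psi_2$ of Proposition~\ref{morphism.miguel}, and applies \cite[Theorem 1.5]{criterion}, checking that $\varphi$ is unobstructed because $H^1(\mathcal N_\varphi)=0$, which follows from \eqref{sequence.miguel} together with $H^1(\mathcal N_{Y,\mathbf P^3})=H^1(\mathcal N_{Y,\mathbf P^3}\otimes\mathcal E)=H^1(\mathcal N_\pi)=0$. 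Without some such substitute argument your part (2) establishes the existence of the ribbon in (iv) but not its smoothability in (v).
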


\begin{proof}
First we see that there are ribbons $\widehat Y$ as in (i), (ii), of
arithmetic genus
$5, 6, 7$ and $9$ and with conormal bundle $\mathcal E''$, for any line
bundle $\mathcal E''$ of degree $-5$ or $-6$ or satisfying $\mathcal
E''=\mathcal
E$ or satisfying
$\mathcal E'' = \mathcal O_Y(-2)$. Note first that such a line bundle $\mathcal
E''$ is a subbundle of $\mathcal E$.
Then for our purpose it suffices to prove
the existence of an element $\mu$ of Hom$(\mathcal I/\mathcal I^2)$ with im
$\mu=\mathcal E''$.
Any smooth elliptic normal curve of degree $4$ in $\mathbf P^3$ is the complete
intersection of two quadrics, so $\mathcal N_{Y, \mathbf P^3} = \mathcal
O_Y(2) \oplus  \mathcal O_Y(2)$.
The line bundle $\mathcal O_Y(2) \otimes \mathcal E$ is of degree
$4$ on $Y$ so,
for any subbundle $\mathcal O_Y \otimes \mathcal E''$ of
$\mathcal O_Y(2) \otimes \mathcal E$ of degree $2, 3$ or $4$, there are
nonzero global sections $\sigma_1$ and $\sigma_2$ of $\mathcal O_Y(2) \otimes
\mathcal E''$ with disjoint zero loci. Then, for  any line bundle $\mathcal
E''$ of
degree $-5$ or $-6$ or satisfying $\mathcal E''=\mathcal E$ or
$\mathcal E'' = \mathcal O_Y(-2)$  there exists an element $\mu \in
\textrm{Hom}(\mathcal I/\mathcal I^2,\mathcal E)$ whose image is $\mathcal E''$.

\smallskip

\noindent Now we see that if $\widetilde Y$ and $\widehat
Y$ are as in (i), (ii), then $\hat g=5, 6, 7$ or $9$.
If $\mathcal E''$ is a subbundle of $\mathcal E$, then deg$\mathcal E'' \leq
-4$
(and if deg$\mathcal E'' = -4$, then  $\mathcal E=\mathcal E''$). If
deg$\mathcal E'' \leq -9$, then $\mathcal N_{Y,
\mathbf P^3} \otimes \mathcal E''$ does not have nonzero global sections, so if
$\mu$ is a nonzero element of Hom$(\mathcal I/\mathcal I^2, \mathcal E)$ its
image could not be such $\mathcal E''$.
On the other hand, if deg$\mathcal E'' = -7$, any two global sections of
$\mathcal O_Y \otimes \mathcal E''$ have a common zero, so there are no $\mu \in
\textrm{Hom}(\mathcal I/\mathcal I^2,\mathcal E)$ whose image is $\mathcal E''$.

\smallskip

\noindent Now assume $\mathcal E^{-1} \neq \mathcal O_Y(1)$. Obviously there
are nonzero elements $\mu$ in Hom$(\mathcal I/\mathcal I^2, \mathcal E)$
and, since (1),  (2) an
(3) of Corollary~\ref{general.nonCMsmoothing.ribbons} are satisfied, then (i),
(ii) and
(iii)
follow from Corollary~\ref{general.nonCMsmoothing.ribbons}. Moreover, the double
structures $\widetilde Y$ and the ribbons $\widehat Y$ whose existence we showed
in the first paragraph of this proof also satisfy (iii). Finally, last claim of
(1)
follows
from Observation~\ref{nonCMremark}.

\smallskip
\noindent
Now assume $\mathcal E^{-1}= \mathcal O_Y(1)$. Since one can choose two
sections  of $\mathcal O_Y(1)$ having no common zeros, there exist
surjective elements in $\mathrm{Hom}(\mathcal I/ \mathcal I^2,
\mathcal E)$.
Let
$\mu$ be one of them. Then $\mu$ corresponds to a ribbon on
$Y$ of arithmetic genus $5$, embedded in $\mathbf P^3$. Since in this case
$h^1(\mathcal E \otimes  \mathcal O_Y(1))=h^1(\mathcal O_Y) \neq 0$, we
cannot use Corollary~\ref{general.nonCMsmoothing.ribbons}. Instead we will
apply
\cite[Theorem
1.5]{criterion}. To do so, let $B$ be a smooth divisor in $|\mathcal E^{-2}|$,
let $\pi: C \longrightarrow Y$ be the double cover of $Y$ branched along $B$
and with trace zero module $\mathcal E$ and let $\varphi$ be the composition
of $\pi$ followed by the inclusion of $Y$ in $\mathbf P^3$. Now consider the
homomorphism $\Psi_2$ associated to $\varphi$ defined in
Proposition~\ref{morphism.miguel}.
By \eqref{sequence.miguel} and the fact that
$H^1(\mathcal N_\pi)=0$ (because $Y$ is a curve), $\Psi_2$ is surjective. Let
$\nu$ be a counterimage of $\mu$ and let $\tilde \varphi$ be the first order
infinitesimal deformation. To apply \cite[Theorem
1.5]{criterion} we observe that
$\varphi$ has an algebraic formally semiuniversal deformation because
$Y$ is a curve. Finally we need to check that $\varphi$ is
unobstructed. This holds if we show that $H^1(\mathcal N_{\varphi})=0$. Using
the sequence \eqref{sequence.miguel}, $H^1(\mathcal N_{\varphi})=0$ follows from
$H^1(\mathcal N_{Y, \mathbf P^3})=0,  H^1(\mathcal N_{Y, \mathbf P^3} \otimes
\mathcal E)=0$ and $H^1(\mathcal N_{\pi})=0$. This proves (2).
\end{proof}

\begin{theorem}\label{elliptic2}
Let $Y$ be a smooth elliptic normal curve of degree $5$ in $\mathbf P^4$.
\begin{enumerate}
 \item If
$\mathcal E$ is a line bundle of degree $-5$ on $Y$ such that $\mathcal
E^{-1} \neq \mathcal O_Y(1)$, then there are nonzero elements $\mu$ of
Hom$(\mathcal I/\mathcal I^2, \mathcal E)$.
For each of these $\mu$ there
exist
\begin{enumerate}
\item[(i)] a double structure $\widetilde Y$  supported
on $Y$ of arithmetic genus
$6$,
embedded in $\mathbf P^4$;
\item[(ii)] a ribbon
$\widehat Y$, supported on $Y$  of arithmetic genus $\hat g$ and with
conormal
bundle $\mathcal E'$, $(\mathcal E'= \textrm{im}\, \mu$) contained in
$\widetilde Y$; and
\item[(iii)]
a smooth irreducible algebraic curve $T$
with a distinguished closed point $0$, a flat family $\mathcal C$ over $T$ and
a
$T$--morphism
$\Phi: \mathcal C \longrightarrow \mathbf P^{4}_{T}$ such that
\begin{enumerate}
\item[(a)] $\mathcal C_t$ is a smooth irreducible curve of genus $6$,
\item[(b)] $\Phi_t$ is an embedding
for all $t \neq 0$, and
\item[(c)] $\Phi_0$ is of degree $2$ onto $Y$ and
$[\Phi(\mathcal
X)]_0=\widetilde Y$
\end{enumerate}
(in particular, $\widetilde Y$ is smoothable in $\mathbf
P^4$).
\end{enumerate}

\smallskip

\noindent Moreover, there are ribbons $\widehat Y$ as above of arithmetic genus
$\hat g=6, 7, 8$ and $9$ and with conormal bundle $\mathcal E''$, for any line
bundle $\mathcal E''$ of degree $-6, -7$ or $-8$ or such that $\mathcal
E''=\mathcal E$. Conversely, if $\widetilde Y$ and $\widehat
Y$ are as in (i), (ii), then $\hat g=6, 7, 8$ or $9$.

\smallskip

\noindent The double
structures $\widetilde Y$ above  are ribbons
if and only
its arithmetic genus is $6$. In
contrast, if $\hat g =7, 8$ or $9$, then $\widetilde Y$ has
embedded points.

\smallskip

\item If
$\mathcal E$ is a line bundle on $Y$ such that $\mathcal
E^{-1} = \mathcal O_Y(1)$,
then there are surjective elements $\mu$ of
Hom$(\mathcal I/\mathcal I^2, \mathcal E)$. For each one of these
$\mu$, there
exist
\begin{enumerate}
\item[(iv)] a ribbon
$\widetilde Y$, supported on $Y$  of arithmetic genus $6$ and with
conormal
bundle $\mathcal E$; and
\item[(v)]
a smooth irreducible algebraic curve $T$
with a distinguished closed point $0$, a flat family $\mathcal C$ over $T$ and
a
$T$--morphism
$\Phi: \mathcal C \longrightarrow \mathbf P^{4}_{T}$ such that
\begin{enumerate}
\item[(a)] $\mathcal C_t$ is a smooth irreducible curve of genus $6$,
\item[(b)] $\Phi_t$ is an embedding
for all $t \neq 0$, and
\item[(c)] $\Phi_0$ is of degree $2$ onto $Y$ and
$[\Phi(\mathcal
X)]_0=\widetilde Y$
\end{enumerate}
(in particular, $\widetilde Y$ is smoothable in $\mathbf
P^4$).
\end{enumerate}
\end{enumerate}
\end{theorem}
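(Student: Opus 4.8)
The plan for Theorem~\ref{elliptic2} is to mirror the strategy of Theorem~\ref{elliptic} almost verbatim, the point being that an elliptic normal curve of degree $5$ in $\mathbf P^4$ behaves, from the point of view of its normal bundle, very much like the degree $4$ curve in $\mathbf P^3$, except that the ambient projective space is one dimension larger and the relevant numerics shift by one. I would begin by recording the normal bundle of $Y$ in $\mathbf P^4$: an elliptic normal quintic in $\mathbf P^4$ is arithmetically Cohen--Macaulay, projectively normal, and cut out by five quadrics, and its normal bundle is $\mathcal N_{Y,\mathbf P^4}\simeq \mathcal O_Y(2)^{\oplus 3}$ (this is classical; it can be extracted from the Eagon--Northcott resolution of a degree $5$ elliptic normal curve, or from the fact that $\deg \mathcal N_{Y,\mathbf P^4}= (r+1)d = 25$ and $\mathrm{rk}=3$, each summand being a line bundle of degree $10$, hence forced to be $\mathcal O_Y(2)$ once one checks $H^1$ vanishing; alternatively cite the standard reference). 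With $\mathcal N_{Y,\mathbf P^4}\simeq \mathcal O_Y(2)^{\oplus 3}$ in hand, everything else is an arithmetic variation on the proof just given.

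The first block of the proof establishes the existence statement for ribbons $\widehat Y$ with conormal bundle $\mathcal E''$ of degree $-6,-7,-8$ or equal to $\mathcal E$. As in Theorem~\ref{elliptic}, such an $\mathcal E''$ is automatically a subbundle of the degree $-5$ bundle $\mathcal E$ (a line bundle of strictly smaller degree, or $\mathcal E$ itself). It suffices to produce $\mu\in\mathrm{Hom}(\mathcal I/\mathcal I^2,\mathcal E)$ with $\mathrm{im}\,\mu = \mathcal E''$, i.e.\ a homomorphism $\mathcal N_{Y,\mathbf P^4}^{\vee}\to \mathcal E$ with image $\mathcal E''$; equivalently a section of $\mathcal N_{Y,\mathbf P^4}\otimes\mathcal E''= \bigl(\mathcal O_Y(2)\otimes\mathcal E''\bigr)^{\oplus 3}$ whose vanishing locus does not force the image to drop to a proper subsheaf. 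Since $\mathcal O_Y(2)\otimes\mathcal E''$ has degree $10+\deg\mathcal E''\in\{2,3,4,5\}$ on the elliptic curve $Y$, it is globally generated (degree $\geq 2$ on an elliptic curve) and $h^0 = 10+\deg\mathcal E''\geq 2$, so one can choose, say, two sections with disjoint zero loci; composing the corresponding map $\mathcal O_Y^{\oplus 2}\hookrightarrow \mathcal O_Y(2)\otimes\mathcal E''$ appropriately and twisting down by $\mathcal O_Y(2)$ produces the desired $\mu$ with image exactly $\mathcal E''=\mathcal O_Y(-2)\otimes(\mathcal O_Y(2)\otimes\mathcal E'')$. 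The corresponding arithmetic genera are $\hat g = g-\chi(\mathcal E'') = 1+\deg\mathcal E''\cdot(-1)$... more precisely $\hat g = g - \chi(\mathcal E'')= 1 - (\deg\mathcal E'' + 1 - 1) = 1-\deg\mathcal E''$ for a line bundle on an elliptic curve, giving $\hat g = 7,8,9$ for $\deg\mathcal E''=-6,-7,-8$ and $\hat g = 6$ for $\mathcal E''=\mathcal E$ of degree $-5$; this matches the list $6,7,8,9$ in the statement.

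For the converse --- that if $\widetilde Y,\widehat Y$ are as in (i),(ii) then $\hat g\in\{6,7,8,9\}$ --- I would argue exactly as in Theorem~\ref{elliptic}: if $\mathcal E''=\mathrm{im}\,\mu$ is a subbundle of $\mathcal E$ then $\deg\mathcal E''\leq -5$, with equality forcing $\mathcal E''=\mathcal E$; if $\deg\mathcal E''\leq -9$ then $\mathcal N_{Y,\mathbf P^4}\otimes\mathcal E''=\bigl(\mathcal O_Y(2)\otimes\mathcal E''\bigr)^{\oplus 3}$ has a summand of nonpositive degree $\leq 10-9-1$... here I should be careful: $\deg(\mathcal O_Y(2)\otimes\mathcal E'') = 10+\deg\mathcal E''$, which is $\leq 1$ when $\deg\mathcal E''\leq -9$, and is $\leq 0$ when $\deg\mathcal E''\leq -10$; a degree $1$ line bundle on an elliptic curve has a $1$-dimensional space of sections all vanishing at a common point, so $\mathrm{im}\,\mu$ cannot equal such $\mathcal E''$, while degree $\leq 0$ gives no nonzero sections at all. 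Hence $\deg\mathcal E''\in\{-5,-6,-7,-8\}$ i.e.\ $\hat g\in\{6,7,8,9\}$, the ribbon case being $\hat g=6$ (when $\mathcal E''=\mathcal E$, so $\widetilde Y=\widehat Y$ is a ribbon by Observation~\ref{nonCMremark}) and the embedded-points case being $\hat g\in\{7,8,9\}$. The main obstacle, and the only place requiring genuine attention, is that when $\mathcal E^{-1}\neq\mathcal O_Y(1)$ one must verify hypotheses (1),(2),(3) of Corollary~\ref{general.nonCMsmoothing.ribbons}: (1) holds because $\mathcal E^{-2}$ has positive degree $10$ so $|\mathcal E^{-2}|$ contains a smooth effective divisor; (2) amounts to $h^0(\mathcal O_Y(1))+h^0(\mathcal E\otimes\mathcal O_Y(1)) = 5 + \deg(\mathcal E\otimes\mathcal O_Y(1)) = 5 + (5-5) = 5$... which must be $\geq r+1=5$, so it holds with equality precisely because $h^0(\mathcal E\otimes\mathcal O_Y(1))\geq 1$, and here one needs $\mathcal E\otimes\mathcal O_Y(1)$ to be effective, i.e.\ nontrivial of degree $0$ forces $h^0=0$ --- so in fact one needs $\mathcal E\otimes\mathcal O_Y(1)=\mathcal O_Y$, contradiction with $\mathcal E^{-1}\neq\mathcal O_Y(1)$! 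This is exactly parallel to the dichotomy in Theorem~\ref{elliptic}: in part (1) one must instead invoke the diagram~\eqref{N-F} argument or directly use that $\mathcal N_{Y,\mathbf P^4}\otimes\mathcal E$ has sections with nonempty vanishing locus (its summands $\mathcal O_Y(2)\otimes\mathcal E$ have degree $5\geq 2$), while in part (2), where $\mathcal E^{-1}=\mathcal O_Y(1)$, one has $h^1(\mathcal E\otimes\mathcal O_Y(1))=h^1(\mathcal O_Y)=1\neq 0$ and so Corollary~\ref{general.nonCMsmoothing.ribbons} does not apply; instead one runs the argument of part (2) of Theorem~\ref{elliptic} verbatim --- choose a smooth $B\in|\mathcal E^{-2}|$, form the double cover $\pi:C\to Y$ with trace-zero module $\mathcal E$, take $\varphi$ the composite, lift $\mu$ along the surjection $\Psi_2$ (surjective since $H^1(\mathcal N_\pi)=0$ as $C$ is a curve), and apply \cite[Theorem~1.5]{criterion}, the unobstructedness of $\varphi$ following from $H^1(\mathcal N_\varphi)=0$ via sequence~\eqref{sequence.miguel} together with $H^1(\mathcal N_{Y,\mathbf P^4})=0$, $H^1(\mathcal N_{Y,\mathbf P^4}\otimes\mathcal E)=H^1(\mathcal O_Y(2)\otimes\mathcal E)^{\oplus 3}=0$ (degree $5$ line bundle on elliptic curve) and $H^1(\mathcal N_\pi)=0$. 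Finally the local Cohen--Macaulay-ness dichotomy and the presence of embedded points for $\hat g>6$ follow directly from Observation~\ref{nonCMremark}.
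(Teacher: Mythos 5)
Your argument rests on the identification $\mathcal N_{Y,\mathbf P^4}\simeq \mathcal O_Y(2)^{\oplus 3}$, and this is false. The degree of the normal bundle of an elliptic normal quintic is $25$ (from the Euler sequence, $\deg T_{\mathbf P^4}|_Y=5\cdot 5=25$ and $\deg T_Y=0$), whereas $\mathcal O_Y(2)^{\oplus 3}$ has degree $30$; indeed $25$ is not even divisible by $3$, so $\mathcal N_{Y,\mathbf P^4}$ cannot be a direct sum of three copies of any single line bundle. The curve is cut out by five quadrics but is not a complete intersection (a codimension--$3$ complete intersection of quadrics would have degree $8$, not $5$), so the analogy with the degree--$4$ case in $\mathbf P^3$ breaks down precisely at this point. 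The paper instead invokes Hulek's computation $\mathcal N_{Y,\mathbf P^4}=E_Y(8O)$, where $E_Y$ is the rank--$3$ bundle given by the unique nonsplit extension of the indecomposable rank--$2$ bundle $E'_Y$ (with determinant $\mathcal O_Y(O)$) by $\mathcal O_Y$; its graded pieces after twisting by $\mathcal E$ are $\mathcal O_Y(8O)\otimes\mathcal E$ (twice) and $\mathcal O_Y(9O)\otimes\mathcal E$, of degrees $3,3,4$, not $5,5,5$.

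This is not a cosmetic discrepancy: the indecomposability is exactly what makes the analysis of the possible images $\mathcal E''=\mathrm{im}\,\mu$ delicate. With your (incorrect) split bundle, producing a section of $\mathcal N_{Y,\mathbf P^4}\otimes\mathcal E''$ vanishing nowhere is easy because each summand is globally generated of degree $\geq 2$. With the true bundle, the summand--by--summand argument is unavailable, and the paper must lift sections through the two extensions of \eqref{n.split.normal2}; the hardest case, $\deg\mathcal E''=-8$ with $\mathcal E''\neq\mathcal O_Y(-8O)$, requires showing that every nonzero section of $E'_Y(8O)\otimes\mathcal E''$ is nowhere vanishing, and the proof of this uses the nonsplitness of $E'_Y$ in an essential way (a section vanishing at a point would force the extension to split). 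Similarly, the exclusion of $\deg\mathcal E''\leq -9$ in the converse direction uses the nonsplit filtration, not a summand of nonpositive degree. A secondary slip: your discussion of hypothesis (2) of Corollary~\ref{general.nonCMsmoothing.ribbons} in part (1) ends in a spurious ``contradiction''; in fact $h^0(\mathcal O_Y(1))+h^0(\mathcal E\otimes\mathcal O_Y(1))=5+0=5=r+1$, so (2) holds as stated, and (3) holds because a nontrivial degree--$0$ line bundle on an elliptic curve has vanishing $h^1$. The overall architecture you propose (reduce to the Corollary in case (1), run the \cite[Theorem 1.5]{criterion} argument in case (2), finish with Observation~\ref{nonCMremark}) matches the paper, but the core computation that justifies the list $\hat g\in\{6,7,8,9\}$ has to be redone from the correct normal bundle.
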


\begin{proof}
To prove both (1) and (2),
first we need to study Hom$(\mathcal I/\mathcal I^2,\mathcal E)$ for any line
bundle $\mathcal E$ on $Y$ of degree $-5$.
Let $O$ be a point of $Y$ such that $\mathcal O_Y(1)=\mathcal O_Y(5O)$.
Let $E^{\prime}_Y$
be the unique indecomposable rank $2$ vector bundle on $Y$ whose determinant is
$\mathcal
O_Y(O)$ and let
$E_Y$ be the rank $3$ vector bundle given by the unique non split extension
\begin{equation*}\label{n.split.s}
\xymatrix@R-20pt{
0 \ar[r] & \mathcal O_Y \ar[r] & E_Y \ar[r] &  E^{\prime}_Y \ar[r] & 0.
}
\end{equation*}
It follows from  \cite[Corollary V.1.4]{Hul} that
$\mathcal N_{Y, \mathbb P^4}= E_Y(8O)$.
Thus we have the exact sequences
\begin{equation}\label{n.split.normal}
\xymatrix@R-20pt{
0 \ar[r] & \mathcal O_Y(8O) \otimes \mathcal E \ar[r] & E^{\prime}_Y(8 O)
\otimes \mathcal E \ar[r] &
\mathcal O_Y(9 O) \otimes \mathcal E\ar[r] & 0,\\
0 \ar[r] & \mathcal O_Y(8O) \otimes \mathcal E \ar[r] & \mathcal N_{Y, \mathbb
P^4} \otimes \mathcal E \ar[r] &
 E^{\prime}_Y(8 O) \otimes \mathcal E \ar[r] & 0.
}
\end{equation}
Since $\mathcal E$ has degree $-5$, $\mathcal O_Y(8O) \otimes \mathcal E$ and $\mathcal O_Y(9O) \otimes \mathcal E$ are globally generated and $H^1(\mathcal O_Y(8O) \otimes \mathcal E)=0$. Then \eqref{n.split.normal} implies that
$\mathcal N_{Y, \mathbb P^4} \otimes \mathcal E$ is globally generated. Therefore $\mathcal N_{Y,
\mathbb P^4} \otimes \mathcal E$ has nowhere vanishing global sections, so
there are surjective elements $\mu$ of Hom$(\mathcal I/\mathcal I^2,\mathcal
E)$.

\smallskip
\noindent  Now we are going to prove (1) so we assume $\mathcal E^{-1} \neq
\mathcal O_Y(1)$. We have just seen
that there are $\mu$ which are surjective, so in particular, there are
homomorphisms $\mu$ in Hom$(\mathcal I/\mathcal I^2, \mathcal E)$
which are nonzero. Now we see that if $\widetilde Y$ and $\widehat
Y$ are as in (i), (ii), then $\hat g=5, 6, 7$ or $9$. For this we study the possible images of a nonzero element $\mu$ of Hom$(\mathcal I/\mathcal I^2, \mathcal E)$.
Recall
that in this case the image of $\mu$ is $\mathcal E'$, which is a line subbundle of
$\mathcal E$. Since the degree of $\mathcal E$ is $-5$, then the deg$\mathcal E'
\leq -5$. On the other hand, \eqref{n.split.normal} and the fact that $E'_Y$
does not split implies that if deg$\mathcal E' \leq -9$, then $\mathcal N_{Y,
\mathbb P^4} \otimes \mathcal E'$ does not have nonzero global sections, so
no such $\mathcal E'$ can be the image of $\mu$.

\smallskip
\noindent \noindent Now we see that there are ribbons $\widehat Y$ as in (i), (ii), of
arithmetic genus
$6, 7, 8$ and $9$ and with conormal bundle $\mathcal E''$, for any line
bundle $\mathcal E''$ of degree $-6, -7$ or $-8$ or satisfying $\mathcal
E''=\mathcal
E$.  For this it suffices to show the existence of
homomorphisms $\mu$ whose image is such a line bundle $\mathcal E''$. If $\mathcal E''=\mathcal E$,
we did already see this when we proved the existence of surjective elements in Hom$(\mathcal
I/\mathcal I^2, \mathcal E)$. If $\mathcal E''$ is of degree $-6,
-7$ or $-8$, then $\mathcal E''$ is
a subbundle of $\mathcal E$ so for each line bundle $\mathcal E''$ of
degree $-6, -7$ or $-8$ there is an effective divisor $D$ on $Y$, respectively
of degree $1, 2$ or $3$, such that $\mathcal E''=\mathcal E \otimes \mathcal O_Y(-D)$. Then,
such $\mathcal E''$ being the image of some
$\mu$ is equivalent to the existence of a nowhere vanishing global section of
$\mathcal N_{Y, \mathbb P^4} \otimes \mathcal E''$. For the latter, it suffices to show the
existence of a global section of $\mathcal
N_{Y, \mathbb P^4} \otimes \mathcal E$ vanishing exactly along $D$.  To prove
the existence of this global section we start arguing for  $\mathcal
E''=\mathcal O_Y(-8O)$. In this case $\mathcal O_Y(8O) \otimes \mathcal E=\mathcal O_Y(D)$, so there is global section of $\mathcal O_Y(8O) \otimes \mathcal E$ that maps to a nonzero global section of $\mathcal
N_{Y,
\mathbb P^4} \otimes \mathcal E$
vanishing exactly along
the divisor $D$.

\smallskip
\noindent Now suppose $\mathcal E'' \neq \mathcal O_Y(-8O)$ and is of degree
$-6,
-7$ or $-8$. Then it suffices to prove that
\begin{enumerate}
\item[(*)] there is a nowhere vanishing section
$t \in H^0(E'_Y(8 O) \otimes \mathcal E'')$.
\end{enumerate}

\smallskip
\noindent
Indeed, we consider the exact sequences
\begin{equation}\label{n.split.normal2}
\xymatrix@R-20pt{
0 \ar[r] & \mathcal O_Y(8O) \otimes \mathcal E'' \ar[r] & E'_Y(8 O)
\otimes \mathcal E'' \ar[r] &
\mathcal O_Y(9 O) \otimes \mathcal E''\ar[r] & 0,\\
0 \ar[r] & \mathcal O_Y(8O) \otimes \mathcal E'' \ar[r] & \mathcal N_{Y, \mathbb
P^4} \otimes \mathcal E'' \ar[r] &
 E^{\prime}_Y(8 O) \otimes \mathcal E'' \ar[r] & 0.
}
\end{equation}
Thus, if (*) holds, since $h^1(\mathcal O_Y(8O) \otimes \mathcal
E'')=0$, then
the section $t$ can be lifted to a section $s \in H^0(\mathcal
N_{Y, \mathbb P^4} \otimes \mathcal E'')$. Thus $s$ is  a
nowhere vanishing global section of $\mathcal
N_{Y, \mathbb P^4} \otimes \mathcal E''$ as wished.

\smallskip
\noindent Now let us prove (*).
If $\mathcal E''$ is a subbundle of $\mathcal E$ of degree $-6$, then
the first sequence of \eqref{n.split.normal2}
shows that $E^{\prime}_Y (8O) \otimes \mathcal E''$ is globally
generated. This implies (*) in this case.
If $\mathcal E''$  is a subbundle of $\mathcal E$ of degree $-7$, then
$\mathcal O_Y(8O) \otimes \mathcal E''=\mathcal O_Y(P)$ for some $P
\in Y$.
Let $t' \in H^0(E^{\prime}_Y(8 O) \otimes \mathcal E'')$ be a
counterimage of
a section $t'' \in H^0(\mathcal O_Y(9 O) \otimes \mathcal E'')$ not
vanishing at $P$.
Then, for a suitable $r \in H^0(\mathcal O_Y(8O) \otimes \mathcal E'')$
the zero
locus of $t=t'+r \in H^0(E^{\prime}_Y(8 O) \otimes \mathcal E'')$ is
empty.
Finally consider a subbundle $\mathcal E''$ of $\mathcal E$ of degree
$-8$ such that $\mathcal E'' \neq \mathcal
O(-8O)$.
In this case, $h^0(E^{\prime}_Y (8O) \otimes \mathcal E'')= h^0(\mathcal
O_Y(9O) \otimes \mathcal E'')=1$, so $E^{\prime}_Y (8O) \otimes \mathcal
E''$ has nonzero global sections which are either nowhere vanishing or
vanish exactly along one point of $Y$. We claim that only the
former happens. Suppose the contrary, i.e, suppose there exists a section $t
\in H^0(E^{\prime}_Y (8O) \otimes \mathcal E'')$ whose zero locus is exactly one
point of $Y$. Then, since $\mathcal E''$ is a degree $-8$
subbundle of $\mathcal E$, which has degree $-5$, $t$ gives rise to a global
section $m$ of $E^{\prime}_Y (8O) \otimes \mathcal E$ that vanishes on a
subscheme
of $Y$ of length exactly $4$. We see now that this is impossible. Indeed,
consider  the first sequence of \eqref{n.split.normal}. If $m$ maps to
$0 \in H^0(\mathcal O_Y(9O) \otimes \mathcal E)$, then $m$ comes from a
nonzero global section of $\mathcal O_Y(8O) \otimes \mathcal E$, which is a
line bundle of degree $3$, so $m$ would vanish at a subscheme of $Y$ of length
exactly $3$, not $4$. Thus $m$ should map to a nonzero
global section $m'$ of $\mathcal O_Y(9O) \otimes \mathcal E$. Since $\mathcal
O_Y(9O) \otimes \mathcal E$ is a line bundle of degree $4$, $m$ vanishes along a
subscheme of $Y$ of length $4$ if and only if the zero loci of $m$ and $m'$
are the same.
In that case the exact sequence \eqref{n.split.normal} will split, so $E'_Y$ would be decomposable and this is a contradiction.
Thus all the nonzero global sections
of $E^{\prime}_Y (8O) \otimes \mathcal E^{\prime}$ are nowhere vanishing.

\smallskip
\noindent Now we finish the proof of (1).
Since (1), (2) and (3) of  Corollary~\ref{general.nonCMsmoothing.ribbons} are
satisfied,
(i), (ii) and (iii) follow from Corollary~\ref{general.nonCMsmoothing.ribbons}
and the last claim of (1) follows from
Observation~\ref{nonCMremark}.

\medskip
\noindent Now we are going to prove (2), so assume $\mathcal E^{-1}=\mathcal
O_Y(1)$. As seen before, there are surjective homomorphisms in Hom$(\mathcal
I/\mathcal I^2,\mathcal E)$. In this case we cannot apply
Corollary~\ref{general.nonCMsmoothing.ribbons} because $\mathcal E
\otimes O_Y(1)$ equals $\mathcal O_Y$, which is special. Then (2) follows from
\cite[Theorem 1.5]{criterion} arguing as we did in the proof of
Theorem~\ref{elliptic}, (2).
\end{proof}

\section{Hom$(\mathcal I/\mathcal I^2,\mathcal E)$ and the semiuniversal
deformation of $\varphi$
}\label{geometric.section}

\noindent In Section~\ref{general.section}
we gave a geometric interpretation
of a given nonzero element $\mu$ of Hom$(\mathcal
I/\mathcal I^2, \mathcal E)$: $\mu$ ``produces'' a  multiple structure
$\widetilde
Y$ on $Y$ in $\mathbf P^r$ that appears when a family of embeddings (more
generally, a family of morphisms of degree $1$) degenerates  to a
double cover of $Y$ (in fact, in Section~\ref{general.section} we gave this
interpretation for homomorphisms $\mu$ whose image has rank $n$ and families of
embeddings degenerating to an $n$--to--one morphism, but recall that from
Section~\ref{general.double.section} onwards we are assuming $\mathcal E$ to be
a line bundle).  In this
section we explore further how this process takes
place. There are specific conditions under which the double structure
``produced'' by $\mu$ is determined uniquely. E.g., this happens if $\mu$ is
surjective (see Observation~\ref{nonCMremark}). However, in general, the
assignment of a double structure to an element $\mu$ of Hom$(\mathcal
I/\mathcal I^2, \mathcal E)$ is far from unique. Indeed, the next
Proposition~\ref{geom.inter.rat}  shows that $\widetilde Y$ is not uniquely
determined by $\mu$, not
even by the counterimage $\nu \in H^0(\mathcal N_\varphi)$ chosen in the
construction made in the proof of Theorem~\ref{general.nonCMsmoothing}.
Instead, $\mu$ only
determines
the minimal primary component of the ideal sheaf of $\widetilde Y$. To determine
$\widetilde Y$ completely we need to specify not only a tangent vector $v$ to
$\mathcal V$ but also the way in which we extend $v$ to an algebraic or, at
least, formal curve $T$, tangent to $v$, in the process to produce
a family of morphisms deforming $\varphi$. I.e, in order to determine
$\widetilde Y$ we need not only to look at a first order infinitesimal
deformation of $\varphi$ but also possibly at higher order infinitesimal
deformations of $\varphi$.

\begin{proposition}\label{geom.inter.rat}
Let $C$ be a smooth, irreducible hyperelliptic curve of genus $\tilde g$, let
$\pi: C \longrightarrow \mathbf P^1$ be its associated hyperelliptic double
cover and let $\varphi: C \longrightarrow \mathbf P^{\tilde g}$ be the morphism
induced by $H^0(\pi^*\mathcal O_{\mathbf P^1}(\tilde g))$. Let $\mathcal V$ be
the base of an algebraic formally
semiuniversal deformation
of $\varphi$ and let $0$ denote the point of
$\mathcal V$
corresponding to $\varphi$. Let $\mathcal U$ be the
locus
of $\mathcal V$
parameterizing embeddings from smooth irreducible curves to $\mathbf P^{\tilde
g}$,
let $\mathcal Z$ be the complement of $\mathcal U$ in $\mathcal V$ and
let $\mathcal H$ be the locus of $\mathcal V$
parameterizing morphisms of degree $2$ from smooth irreducible curves onto their
images in $\mathbf P^{\tilde g}$.
 There exists
\begin{enumerate}
 \item a
nonzero, non
surjective element   $\nu$ of $H^0(\mathcal N_\varphi)$;
\item two smooth irreducible algebraic curves $T_1$ and $T_2$, passing through
$0$ and tangent to $v$ (where $v$ is the vector of the
tangent space to $\mathcal V$ at $0$ corresponding to $\nu$), $T_1$
contained in $\mathcal Z \smallsetminus \mathcal H$ and $T_2$ contained
in $\mathcal U$ except for $0$; and
\item
two flat families $\mathcal C_1$ and $\mathcal
C_2$ of smooth irreducible curves of genus $\tilde g$ such that
the $T_1$--morphism
$\Phi_1: \mathcal C_1 \longrightarrow \mathbf P^{\tilde g}_{T_1}$ and the
$T_2$--morphism
$\Phi_2: \mathcal C_2 \longrightarrow \mathbf P^{\tilde g}_{T_2}$ obtained as
pullbacks from the semiuniversal deformation of $\varphi$ satisfy that
\smallskip
\begin{enumerate}
\item[(a)] $[\Phi_1(\mathcal
C_1)]_0$ is a ribbon of genus $\tilde g +1$ and, for all $t \in T_1, t
\neq 0$, $(\Phi_1)_t$ is of
degree $1$ and $(\Phi_1(\mathcal C_1))_t$ is a reduced and irreducible curve of
arithmetic genus $\tilde g +1$ with one singular point; and
\item[(b)] $[\Phi_2(\mathcal
C_2)]_0$ is a locally non Cohen--Macaulay double structure of arithmetic genus
$\tilde g$ containing a ribbon of arithmetic genus $\tilde g +1$ and an
embedded point and, for all $t \in T_2, t \neq 0$, $(\Phi_2)_t$ is an
embedding.
\end{enumerate}
\end{enumerate}
\end{proposition}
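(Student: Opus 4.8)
The plan is to realise $\varphi$ as a degenerate member of two different one‑parameter families attached to the \emph{same} tangent vector $v$: one lying (off $0$) in $\mathcal U$ and whose flat limit is a double structure with an embedded point, the other lying in $\mathcal Z\smallsetminus\mathcal H$ and whose flat limit is just a ribbon. \emph{Set‑up.} Write $Y=\varphi(C)$, a smooth rational normal curve of degree $\tilde g$ in $\mathbf P^{\tilde g}$ (so $r=\tilde g$), and identify $\pi$ with the hyperelliptic double cover of $C$ followed by $\mathbf P^1\cong Y$; its trace zero module is $\mathcal E=\mathcal O_{\mathbf P^1}(-\tilde g-2+1)=\mathcal O_{\mathbf P^1}(-\tilde g-1)$ and, by \eqref{conormalPd}, $\mathcal I/\mathcal I^2\cong\mathcal O_{\mathbf P^1}(-\tilde g-2)^{\oplus\,\tilde g-1}$. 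From $\pi_*\mathcal O_C=\mathcal O_{\mathbf P^1}\oplus\mathcal E$ we get $h^0(\varphi^*\mathcal O(1))=h^0(\mathcal O_{\mathbf P^1}(\tilde g))+h^0(\mathcal O_{\mathbf P^1}(-1))=\tilde g+1=r+1$ and $h^1(\mathcal O_Y(1))=h^1(\mathcal E\otimes\mathcal O_Y(1))=0$; and, since $C$ is a curve, $H^1(\mathcal N_\pi)=0$, so $\Psi_2$ is surjective (as in the proof of Theorem~\ref{general.nonCMsmoothing}) and, by \eqref{sequence.miguel}, $H^1(\mathcal N_\varphi)=0$.

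\emph{The family $\mathcal C_1\to T_1$.} Let $\sigma$ be the hyperelliptic involution and $A$ the $g^1_2$ of $C$, so $K_C\cong(\tilde g-1)A$ and $K_C\otimes A\cong\tilde g A\cong\varphi^*\mathcal O(1)$. Fix a general $P\in C$, let $T_1\subset C$ be a small smooth affine neighbourhood of $\sigma P$ with distinguished point $0=\sigma P$, put $\mathcal C_1=C\times T_1$, and let $\Phi_1\colon\mathcal C_1\to\mathbf P^{\tilde g}_{T_1}$ be the $T_1$‑morphism defined fibrewise by the complete linear system $|K_C(P+t)|$, which is base‑point free of projective dimension $\tilde g$ (as $h^1(K_C(P+t))=h^0(\mathcal O_C(-P-t))=0$). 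Then $\Phi_{1,0}$ is the morphism given by $|\tilde g A|$, i.e. $\varphi$ (up to an automorphism of $\mathbf P^{\tilde g}$); and for $t\neq0$ the divisor $P+t$ is not a $g^1_2$ (the only one on $C$ being $A$), whence $h^0(K_C(P+t)-x-y)=\tilde g-1$ unless $\{x,y\}=\{P,t\}$ and $h^0(K_C(P+t)-2x)=\tilde g-1$ always, so $\Phi_{1,t}$ is an immersion identifying exactly $P$ with $t$; thus $\Phi_{1,t}$ is of degree $1$, is not an embedding, and its image is a reduced, irreducible, degree $2\tilde g$ curve with a single node, hence of arithmetic genus $\tilde g+1$. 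The classifying morphism $T_1\to\mathcal V$ therefore sends $T_1\smallsetminus\{0\}$ into $\mathcal Z\smallsetminus\mathcal H$; its derivative at $0$ is the associated $\nu\in H^0(\mathcal N_\varphi)$, which is nonzero since the family is non‑constant, so after shrinking $T_1$ we may view $T_1$ as a smooth curve in $\mathcal V$ through $0$ tangent to $v$ and $(\mathcal C_1,\Phi_1)$ as a pullback of the semiuniversal family. Set $\mu=\Psi_2(\nu)$, $\mathcal E'=\operatorname{im}\mu$.

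\emph{The flat limit along $T_1$ is a ribbon.} Because $\Phi_1$ is finite and $(\Phi_1)_*\mathcal O_{\mathcal C_1}$ is flat — hence torsion free — over $T_1$, the morphism $\Phi_1$ factors through the flat closure $\overline{\mathcal Y_1}$ of $\Phi_1(\mathcal C_1)|_{T_1\smallsetminus0}$; hence $[\Phi_1(\mathcal C_1)]_0:=(\overline{\mathcal Y_1})_0$ contains $(\operatorname{im}\widetilde\varphi)_0$, which by \cite[Theorem 3.8]{Gon} is the rope $\widehat Y$ on $Y$ with conormal bundle $\mathcal E'$. Now $D:=[\Phi_1(\mathcal C_1)]_0$ is, as a flat limit of the degree $2\tilde g$, arithmetic genus $\tilde g+1$ curves $\Phi_{1,t}(\mathcal C_{1,t})$, a double structure on $Y$ of arithmetic genus $\tilde g+1$. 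First, $\mathcal E'\neq0$: otherwise $\widehat Y=Y$, and a local analysis of $\widetilde\varphi$ near $\varphi(P)$ — moving $t$ off $\sigma P$ pulls the sheet of $\pi$ through $\sigma P$ off $Y$ to first order while leaving the sheet through $P$ on $Y$ — shows $\widehat Y$ genuinely thickens $Y$, a contradiction; this is the one point needing care (see below). Granting $\mathcal E'\neq0$, it is a rank $1$ sheaf which is at once a subsheaf of $\mathcal E=\mathcal O_{\mathbf P^1}(-\tilde g-1)$ and a quotient of $\mathcal O_{\mathbf P^1}(-\tilde g-2)^{\oplus\,\tilde g-1}$, hence equals $\mathcal O_{\mathbf P^1}(-\tilde g-1)$ or $\mathcal O_{\mathbf P^1}(-\tilde g-2)$; in particular $\widehat Y$ is a genuine ribbon with $\hat g\in\{\tilde g,\tilde g+1\}$. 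The surjection $\mathcal O_D\twoheadrightarrow\mathcal O_{\widehat Y}$ is an isomorphism at the generic point of $Y$ (both sheaves have $\mathcal O_Y$‑rank $2$), so its kernel has finite length $\ell\geq0$, and comparing Euler characteristics gives $\hat g=p_a(D)+\ell=\tilde g+1+\ell$; combined with $\hat g\leq\tilde g+1$ this forces $\hat g=\tilde g+1$, $\mathcal E'=\mathcal O_{\mathbf P^1}(-\tilde g-2)$, $\ell=0$ and $D=\widehat Y$, a ribbon of arithmetic genus $\tilde g+1$. Thus $\mu$ is nonzero and non surjective, $\nu$ is as in (1), and the statements of (3)(a) hold.

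\emph{The family $\mathcal C_2\to T_2$ and conclusion.} Apply \cite[Theorem 1.1]{GGPropes} to $\varphi$ and the first order deformation $\widetilde\varphi$ attached to $\nu$; its hypotheses hold by the Set‑up (the vanishings $h^1(\mathcal O_Y(1))=h^1(\mathcal E\otimes\mathcal O_Y(1))=0$ and $h^0(\varphi^*\mathcal O(1))=r+1$). This yields a flat family $\mathcal C_2$ of smooth irreducible genus $\tilde g$ curves over a smooth curve $T_2'$ through $0$ and a $T_2'$‑morphism $\Phi_2\colon\mathcal C_2\to\mathbf P^{\tilde g}_{T_2'}$ restricting to $\widetilde\varphi$ on the first infinitesimal neighbourhood of $0$ (so $\Phi_{2,0}=\varphi$) with $\Phi_{2,t}$ an embedding for $t\neq0$; as before the classifying morphism is an immersion at $0$ with derivative $\nu$, so $T_2'$ is a smooth curve $T_2\subset\mathcal V$ through $0$ tangent to $v$ with $T_2\smallsetminus\{0\}\subset\mathcal U$ and $(\mathcal C_2,\Phi_2)$ a pullback of the semiuniversal family. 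Here $\Phi_2(\mathcal C_2)$ is flat over $T_2$, so $[\Phi_2(\mathcal C_2)]_0$ is a double structure on $Y$ of arithmetic genus $\tilde g$ containing $(\operatorname{im}\widetilde\varphi)_0=\widehat Y$ of arithmetic genus $\tilde g+1$; comparing Euler characteristics it equals $\widehat Y$ together with exactly one embedded point, hence is a locally non Cohen–Macaulay double structure of arithmetic genus $\tilde g$ containing a ribbon of arithmetic genus $\tilde g+1$ and an embedded point (alternatively invoke Observation~\ref{nonCMremark}), so (3)(b) holds; this proves (1), (2) and (3). Apart from the quoted results \cite[Theorem 1.1]{GGPropes} and \cite[Theorem 3.8]{Gon}, everything above is bookkeeping with Euler characteristics and with rank $1$ sheaves sandwiched between $\mathcal O_{\mathbf P^1}(-\tilde g-2)^{\oplus\,\tilde g-1}$ and $\mathcal O_{\mathbf P^1}(-\tilde g-1)$; the one genuinely geometric step — the main obstacle — is the verification that $\mu=\Psi_2(\nu)\neq0$, equivalently that the family of $1$‑nodal canonical curves $T_1$ is transverse to the locus $\mathcal H$ of $2{:}1$ maps at $0$, which I would establish by the indicated local computation of $\widetilde\varphi$ near $\varphi(P)$.
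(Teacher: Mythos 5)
Your overall strategy is the same as the paper's: produce one family through $\varphi$ whose general member is a degree‑one map onto a one‑nodal curve of arithmetic genus $\tilde g+1$ (so the flat limit is forced, by the genus/degree bookkeeping with line‑bundle quotients of $\mathcal I/\mathcal I^2\cong\mathcal O_{\mathbf P^1}(-\tilde g-2)^{\oplus \tilde g-1}$, to be a ribbon of genus $\tilde g+1$ with no embedded points), and a second family through the same tangent vector whose general member is an embedding (so the flat limit has genus $\tilde g$ and must be that ribbon plus one embedded point). Where you differ is in the construction of $\Phi_1$: the paper deforms the curve itself, taking $(\mathcal C_1)_t$ non‑hyperelliptic of genus $\tilde g$ polarized by $\omega_{(\mathcal C_1)_t}$ twisted by an effective degree‑$2$ divisor, whereas you keep the curve constant, $\mathcal C_1=C\times T_1$, and vary the line bundle $K_C(P+t)$ as $t$ moves off $\sigma P$. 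Your construction is more concrete and your verification that $|K_C(P+t)|$ gives a degree‑one morphism identifying exactly $P$ with $t$ (with distinct branch tangents, since $h^0(K_C-P)>h^0(K_C-P-t)$... i.e. $h^0(L-2P-t)>h^0(L-2P-2t)$) is correct; either family does the job. Your handling of $T_2$ via \cite[Theorem 1.1]{GGPropes} is equivalent to the paper's use of unobstructedness plus the semiuniversal family, and your Euler--characteristic comparison producing the single embedded point matches the paper's.

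The one step you leave unproved is exactly the one you flag: that $\mu=\Psi_2(\nu)\neq 0$. Note that your genus bookkeeping does rule out \emph{surjectivity} of $\mu$ unconditionally (a surjective $\mu$ would force $\operatorname{im}\tilde\varphi$ to be a genus‑$\tilde g$ ribbon with conormal bundle $\mathcal E$, incompatible with its containment in a genus‑$(\tilde g+1)$ double structure), but it cannot rule out $\mu=0$, since $\operatorname{im}\tilde\varphi=Y$ is also consistent with both flat limits; and $\nu\neq 0$ does not by itself imply $\Psi_2(\nu)\neq 0$, because $\Psi_2$ kills both $H^0(\mathcal N_\pi)$ and the $\mathrm{Hom}(\mathcal I/\mathcal I^2,\mathcal O_Y)$ component. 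The nonvanishing of $\mu$ is genuinely needed for (3)(b), since identifying $\operatorname{im}\tilde\varphi$ with the genus‑$(\tilde g+1)$ ribbon inside $[\Phi_2(\mathcal C_2)]_0$ is what forces the embedded point there. You should be aware, however, that the paper treats this point no more rigorously than you do: it simply asserts that ``$\mathcal C_1$ can be taken so that $\nu$, and therefore $\mu$, is nonzero.'' So this is a soft spot shared with the source rather than a failure of your route; if you carry out the local computation you sketch (checking that the first‑order deformation separates the two sheets of $\pi$ over a general point of $Y$, modulo deformations of $\pi$ and of $Y$ in $\mathbf P^{\tilde g}$), your proof is complete.
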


\begin{proof}
 Let $T_1$ be a smooth,
irreducible, algebraic curve with a distinguished closed point $0$ and let
$\mathcal C_1$ be a flat family over $T_1$ of smooth irreducible curves such
that
$(\mathcal C_1)_0=C$ and $(\mathcal C_1)_t$ is non hyperelliptic of genus
$\tilde g$ for
all $t \in T_1, t \neq 0$. Shrinking $T_1$ if necessary, we may consider a
relative line bundle $\mathcal L$ on $\mathcal C_1$ such that $\mathcal
L_0=\varphi^*\mathcal O_{\mathbf P^{\tilde g}}(1)$ and, for
all $t \in T_1$, $\mathcal L_t$ is
$\omega_{_{(\mathcal C_1)_t}}$ twisted by an effective line bundle of degree
$2$.
The relative global sections of $\mathcal L$ induce a $T_1$--morphism $\Phi_1:
\mathcal C_1 \longrightarrow \mathbf P^{\tilde g}_{T_1}$. Let $\Delta_1$ be the
first infinitesimal neighborhood of $0$ in $T_ 1$, let $\tilde
\varphi=(\Phi_1)_{\Delta_1}$, let $\nu$ be the element of  $H^0(\mathcal
N_\varphi)$ that corresponds to $\tilde \varphi$ and let $\mu=\Psi_2(\nu)$. In
addition, $\mathcal C_1$ can be taken so that
$\nu$, and therefore $\mu$, is nonzero.
Let $Y=\varphi(C)$, which is a smooth rational normal curve of degree $\tilde
g$. The image $\mathcal Y_1$ of $\Phi_1$ is a flat family of $1$--dimensional
subschemes of $\mathbf P^{\tilde g}$ such that, if $t \neq 0$, $(\mathcal
Y_1)_t$ is an irreducible and reduced singular curve of degree $2\tilde g$ and
arithmetic genus $\tilde g +1$, having exactly one singular point. Thus
$(\mathcal Y_1)_0$ is a double structure on $Y$ of arithmetic genus $\tilde g
+1$ that contains im$\tilde \varphi$.
Let $\mathcal E$ be the trace zero module of $\pi$. Then
$\mathcal E=\mathcal O_{\mathbf P^1}(-\tilde g-1)$ and the conormal bundle of
$Y$ in $\mathbf P^{\tilde g}$ is $\mathcal I/\mathcal I^2=\mathcal
O_{\mathbf P^1}(-\tilde g -2)^{\oplus \tilde g-1}$. Then the image of the
homomorphism  $\mu$  of $\textrm{Hom}(\mathcal I/\mathcal I^2,\mathcal E)$ is
either $\mathcal O_{\mathbf P^1}(-\tilde g -1)$ or  $\mathcal O_{\mathbf
P^1}(-\tilde g -2)$. By \cite[Theorem 3.8 (1)]{Gon}, in the first case im$\tilde
\varphi$ is a ribbon on $Y$ of arithmetic genus $\tilde g$ and this
would contradict
the fact that $(\mathcal Y_1)_0$ is a
double structure on $Y$ of arithmetic genus $\tilde g +1$. Thus necessarily the
image of $\mu$ is $\mathcal O_{\mathbf
P^1}(-\tilde g -2)$. Then im$\tilde
\varphi$ is a ribbon on $Y$ of arithmetic genus $\tilde g+1$ and  im$\tilde
\varphi=(\mathcal Y_1)_0$. This proves the existence of the desired family
$\Phi_1$ satisfying (a). Moreover, by formal semiuniversality, since $\nu$ is
nonzero, after possibly shrinking $T_1$ again and taking a suitable \'etale
cover, we may assume that $T_1$ admits an embedding to $\mathcal V$ that maps
$0$ to $[\varphi]$ and the tangent vector to $T_1$ at $0$ to $v$ in such a way
that the pullback of the semiuniversal deformation of $\varphi$ to $T_1$ is
$\Phi_1$. Then the image of $T_1 \smallsetminus \{0\}$ in $\mathcal V$ is
obviously contained in $\mathcal Z \smallsetminus \mathcal H$ by the
construction of $\Phi_1$ so, by an abuse of notation, we may identify $T_1$ with
its
image in $\mathcal V$.

\smallskip

\noindent Now, to construct $\Phi_2$ recall that, since $\mathcal O_Y(1)$ and
$\mathcal E \otimes \mathcal O_Y(1)$ are non special, by
\eqref{sequence.miguel} and   \eqref{N-F}, $\varphi$ is unobstructed. Then it is
possible to find a smooth, irreducible algebraic curve $T_2$ in $\mathcal V$,
passing through $0$, tangent to the vector $v$
corresponding to $\nu$ and contained in the locus $\mathcal U$
except for $0$. Pulling back the semiuniversal deformation of $\varphi$ to
$T_2$ we get a flat family $\mathcal
C_2$ of smooth irreducible curves of genus $\tilde g$ and a
$T_2$--morphism
$\Phi_2: \mathcal C_2 \longrightarrow \mathbf P^{\tilde g}_{T_2}$ such that
$(\Phi_2)_t$ is an embedding for all $t \in T_2, t \neq 0$ and
$(\Phi_2)_{\Delta_2}=\tilde \varphi$. Let $\mathcal Y_2=\Phi_2(\mathcal C_2)$.
Then $(\mathcal Y_2)_t$ is a smooth irreducible curve in $\mathbf P^{\tilde g}$
of degree $2\tilde g$ and genus $\tilde g$ and $(\mathcal Y_2)_0$ is a double
structure on $Y$ also of arithmetic genus $\tilde g$. As before, im$\tilde
\varphi$, which is a ribbon of arithmetic genus $\tilde g +1$, is contained in
$(\mathcal Y_2)_0$, so $(\mathcal Y_2)_0$ is the union of im$\tilde
\varphi$ and a double point supported on an embedded point of $(\mathcal
Y_2)_0$.
This proves the existence of a family $\Phi_2$ satisfying (b).
\end{proof}

\noindent Proposition~\ref{geom.inter.rat} provides an example that shows that, in
general, $\mu$ does not uniquely determine a double structure. However, we saw
(cf. Observation~\ref{nonCMremark}) that, if $\mu$ is surjective, the double
structure associated to $\mu$ is uniquely determined by $\mu$ (it is a ribbon
with conormal bundle $\mathcal E$). The next proposition
 tells the geometric reason behind this: if $\mu$ is surjective, any $\nu$ in
$H^0(\mathcal N_\varphi)$ lying over $\mu$ corresponds to a tangent vector to
$\mathcal V$ that only extends to paths contained in $\mathcal U$.

\begin{proposition}\label{geom.interp.2}
Let $\mu$ be a homomorphism of Hom$(\mathcal I/\mathcal I^2, \mathcal E)$.
Let $C$ be a
smooth irreducible curve of genus $\tilde g$
and let $\varphi: C \longrightarrow \mathbf P^r$ be a morphism from  $C$ to
$\mathbf P^r$ which factors through a double cover $\pi: C \longrightarrow Y$
with trace zero module $\mathcal E$.
Let $\mathcal V$ be the base of an algebraic formally
semiuniversal deformation
of $\varphi$ and let $[\varphi] \in \mathcal V$ be the point of $\mathcal V$
corresponding to $\varphi$.
Let $\nu$ be a counterimage of $\mu$ in $H^0(\mathcal
N_\varphi)$, let $\tilde \varphi$ be the first order infinitesimal deformation
corresponding to $\nu$  and let $v$ the tangent vector of
$\mathcal V$ corresponding to $\tilde \varphi$. Let $\mathcal U$ be the
locus
of $\mathcal V$
parameterizing embeddings from smooth irreducible curves to $\mathbf P^r$
and let $\mathcal Z$ be the complement of $\mathcal U$ in $\mathcal V$.
Suppose that $\varphi$ is unobstructed.
\begin{enumerate}
 \item $v$ is
tangent to an algebraic, smooth,
irreducible curve passing through $[\varphi]$
and contained in $\mathcal U$ except for $[\varphi]$; and
\item $v$ cannot be tangent
to any algebraic, smooth,
irreducible curve passing through $[\varphi]$
and contained in $\mathcal Z$.
\end{enumerate}
\end{proposition}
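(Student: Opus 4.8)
The plan is to prove (1) by quoting the smoothing criterion of \cite{criterion} and to prove (2) by contradiction, by pulling back the semiuniversal deformation of $\varphi$ along a hypothetical curve contained in $\mathcal Z$ and analysing the flat limit of the resulting family of images. Throughout I use that $\mu$ is surjective (as in the discussion preceding the statement), so that $\mathcal E'=\mathcal E$ and $\hat g=\tilde g$; by \cite[Theorem 3.8 (1)]{Gon} the first infinitesimal image $(\textrm{im}\,\tilde\varphi)_0$ is then a ribbon $\widehat Y$ on $Y$ with conormal bundle $\mathcal E$, of degree $2d$ and arithmetic genus $g-\chi(\mathcal E)=\tilde g$.

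For (1) I would argue as in the proof of Theorem~\ref{elliptic}, (2). Since $C$ is a curve, $\varphi$ has an algebraic formally semiuniversal deformation, and $\varphi$ is unobstructed by hypothesis, so \cite[Theorem 1.5]{criterion} applies to the infinitesimal deformation $\tilde\varphi$ attached to $\nu$ and yields a smooth affine irreducible curve $T$, a flat family $\mathcal C\to T$ of smooth irreducible curves of genus $\tilde g$, and a $T$--morphism $\Phi\colon\mathcal C\to\mathbf P^r_T$ whose restriction to the first infinitesimal neighborhood of the distinguished point of $T$ is $\tilde\varphi$ (hence $\Phi_0=\varphi$) and with $\Phi_t$ an embedding for every $t\neq 0$. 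Then, exactly as in the construction of the curves $T_1,T_2$ in the proof of Proposition~\ref{geom.inter.rat}, formal semiuniversality allows me, after shrinking $T$ and passing to a suitable \'etale cover, to view $T$ as a smooth irreducible curve inside $\mathcal V$ through $[\varphi]$, tangent there to $v$, along which $\Phi$ is the pullback of the semiuniversal deformation; since $\Phi_t$ is an embedding for $t\neq 0$ this gives $T\smallsetminus\{[\varphi]\}\subseteq\mathcal U$, proving (1).

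For (2), suppose instead that $v$ is tangent at $[\varphi]$ to a smooth irreducible algebraic curve $T\subseteq\mathcal Z$. I would pull back the semiuniversal deformation along $T\hookrightarrow\mathcal V$ to obtain, after shrinking $T$, a flat family $\mathcal C\to T$ of smooth irreducible curves of genus $\tilde g$ with $\mathcal C_0=C$ and a $T$--morphism $\Phi\colon\mathcal C\to\mathbf P^r_T$ with $\Phi_0=\varphi$ and whose restriction to the first infinitesimal neighborhood is $\tilde\varphi$; because $T\subseteq\mathcal Z$, no $\Phi_t$ with $t\neq 0$ is an embedding. Let $\mathcal Y=\Phi(\mathcal C)$ be the associated flat family of one--dimensional subschemes of $\mathbf P^r_T$. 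As in the proof of Theorem~\ref{general.nonCMsmoothing}, the fact that $\Phi$ restricts to $\tilde\varphi$ on the first infinitesimal neighborhood forces $\widehat Y=(\textrm{im}\,\tilde\varphi)_0\subseteq\mathcal Y_0$, and $\widehat Y$ is purely one--dimensional of degree $2d$. For $t\neq 0$ the morphism $\Phi_t$ is finite of some degree $k\geq 1$ onto the integral curve $\mathcal Y_t$, which therefore has degree $2d/k$ (note $\deg\Phi_t^*\mathcal O_{\mathbf P^r}(1)=\deg\varphi^*\mathcal O_{\mathbf P^r}(1)=2d$). By flatness $\deg\mathcal Y_0=\deg\mathcal Y_t=2d/k$, and since $\widehat Y\subseteq\mathcal Y_0$ we also have $\deg\mathcal Y_0\geq 2d$; hence $k=1$ and $\Phi_t$ is birational onto $\mathcal Y_t$, with $\mathcal C_t$ its normalization. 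Since $\Phi_t$ is not an embedding, $\mathcal Y_t$ is singular, so $p_a(\mathcal Y_t)\geq g(\mathcal C_t)+1=\tilde g+1$, and by flatness $p_a(\mathcal Y_0)\geq\tilde g+1$. On the other hand $\widehat Y\subseteq\mathcal Y_0$ with both of degree $2d$ and $\widehat Y$ a ribbon (hence without embedded points), whence $\chi(\mathcal O_{\mathcal Y_0})\geq\chi(\mathcal O_{\widehat Y})$, i.e. $p_a(\mathcal Y_0)\leq p_a(\widehat Y)=\tilde g$ --- a contradiction, which proves (2).

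The delicate point, and the one I would be most careful about, is the analysis in (2) of the flat family $\mathcal Y=\Phi(\mathcal C)$: one needs that its central fibre still contains the ribbon $\textrm{im}\,\tilde\varphi$ (which rests on \cite[Theorem 3.8 (1)]{Gon} and on the argument already used in Theorem~\ref{general.nonCMsmoothing}), and one needs to exclude, via the degree count, the possibility that the general $\Phi_t$ is a multiple cover of a curve of smaller degree. Once those are in hand the surjectivity of $\mu$ --- through the equality $\hat g=\tilde g$ --- converts the jump in arithmetic genus produced by a singular general image into the desired numerical contradiction.
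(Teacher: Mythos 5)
Your proof is correct, but it takes a genuinely different route from the paper's, most visibly in part (2). The paper's proof is essentially a double citation of \cite{criterion}: for (1) it uses smoothness of $\mathcal V$ at $[\varphi]$ to produce a curve $T$ tangent to $v$, pulls back the semiuniversal deformation, and invokes \cite[Proposition 1.4]{criterion} to conclude that $\Phi_t$ is an embedding for $t\neq 0$; for (2) it notes in one line that a curve in $\mathcal Z$ tangent to $v$ would, after pullback, contradict that same Proposition 1.4. You instead prove (1) via \cite[Theorem 1.5]{criterion} followed by the ``embed $T$ into $\mathcal V$ by formal semiuniversality'' move (exactly as in the construction of $T_1,T_2$ in Proposition~\ref{geom.inter.rat}) --- a minor variant --- and you prove (2) by a self-contained flat-limit computation: containment of the ribbon $\widehat Y=(\mathrm{im}\,\tilde\varphi)_0$ in $\mathcal Y_0$, the degree count forcing $\Phi_t$ to be birational, and the genus comparison $p_a(\mathcal Y_0)=p_a(\mathcal Y_t)\geq\tilde g+1$ versus $p_a(\mathcal Y_0)\leq p_a(\widehat Y)=\tilde g$. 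This is precisely the ``alternate argument\dots based on studying what possible arithmetic genera the flat limit of $\Phi_t(\mathcal C_t)$ might have'' that the paper only mentions parenthetically, and it has the advantage of making the proposition independent of the black box \cite[Proposition 1.4]{criterion}, at the cost of a few implicit verifications (flatness and generic reducedness of the image family $\Phi(\mathcal C)$, which the paper also takes for granted elsewhere). One remark: you are right to read the surjectivity of $\mu$ into the hypotheses even though the statement omits it --- without it part (2) is false, as Proposition~\ref{geom.inter.rat} shows, and the paper's own proof needs it just as much (it is a hypothesis of the cited Proposition 1.4); your version makes explicit where surjectivity enters, namely through $\hat g=\tilde g$.
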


\begin{proof}
If $\varphi$ is unobstructed, then the base $\mathcal V$ of an
algebraic formally
semiuniversal deformation of $\varphi$ is smooth at
the point $[\varphi]$ corresponding to $\varphi$. By semiuniversality, there
exist a smooth algebraic irreducible curve $T$ with a distinguished point
$0$, a flat family $\mathcal C$ over $T$ and  a
$T$--morphism
$\Phi: \mathcal C \longrightarrow \mathbf P^{\tilde g}_{T}$ such that $\mathcal
C_t$ is a smooth irreducible curve, $\Phi_0=\varphi$ and
$\Phi_{_\Delta}=\tilde \varphi$.
\cite[Proposition
1.4]{criterion} implies that, after shrinking $T$ if necessary,
$\Phi_t$ is an embedding for all $t \in T, t \neq 0$ (since we are working with
$1$--dimensional schemes, there is an alternate argument for this, based on
studying what possible arithmetic genera the flat limit of
$\Phi_t(\mathcal C_t)$ might have). This proves (1).
Now suppose
there exists an algebraic smooth
irreducible curve $T$ passing through $[\varphi]$
and contained in $\mathcal Z$ whose tangent vector at $[\varphi]$ is $v$.
Then pulling back the semiuniversal deformation of $\varphi$ to $T$ we would
obtain a family of morphisms contradicting \cite[Proposition
1.4]{criterion}.
\end{proof}

\begin{remark}
 {\rm The unobstructedness of $\varphi$ required
in Proposition~\ref{geom.interp.2} is not a very strong condition. For
instance, $\varphi$ is unobstructed if both $\mathcal O_Y(1)$ and $\mathcal E
\otimes \mathcal O_Y(1)$ are nonspecial (this is condition (3) of
Theorem~\ref{general.nonCMsmoothing} and
Corollary~\ref{general.nonCMsmoothing.ribbons}). On the other hand,
Proposition~\ref{geom.interp.2} remains true if, instead of assuming $\varphi$
to be unobstructed, we assume the existence of
a smooth irreducible algebraic curve $T$
with a distinguished closed point $0$, a flat family $\mathcal C$ over $T$ and
a
$T$--morphism
$\Phi: \mathcal C \longrightarrow \mathbf P^{\tilde g}_{T}$
such that $\mathcal C_t$ is a smooth irreducible curve, $\Phi_0=\varphi$ and
$\Phi_{_\Delta}=\tilde \varphi$, where $\Delta$ is the first infinitesimal
neighborhood of $0$ in $T$.}
\end{remark}

\section*{Appendix}

\noindent We take advantage of this opportunity to fix a gap in the
article~\cite{GPcarpets}, written by the
first and third author. The gap concerns the arguments used
there to prove \cite[Theorem 3.5]{GPcarpets}.  \cite[Theorem 3.5]{GPcarpets}
says that $K3$ carpets supported on rational normal scrolls can be smoothed.
\cite[Theorem 3.5]{GPcarpets} is nevertheless true and a different,
independent proof of it was given in \cite[Corollary 2.9]{criterion}.

\medskip

\noindent We explain first what the problem is with the argument
in~\cite{GPcarpets} and
then we outline the way to fix it. Precisely,  the problem lies in \cite[Lemma
3.2]{GPcarpets}, which is  false as stated. The main thesis of \cite[Lemma
3.2]{GPcarpets} is this:

\smallskip

\noindent {\emph{Let  $\Cal X$ be a flat family of irreducible
varieties over
a smooth irreducible algebraic curve $T$ which is mapped to relative projective
space by a morphism
$\Phi$ induced by a relatively complete
linear series. Assume that $\Phi_t$ is an embedding for all $t \neq 0$ ($0 \in
T$)
and $\Phi_0$ is a finite morphism of degree $2$. Let $H$ be  a hyperplane in
projective space. Then $\Phi(\mathcal C) \cap (H \times T)$ is flat.}}

\smallskip

\noindent This claim is false in general. Indeed, if it were true, using
\cite[Theorem
2.1]{GPcarpets} and arguing
like in \cite[Corollary
3.3]{GPcarpets} we would show that the double structure $\widetilde Y$ that
appears in Corollary~\ref{general.nonCMsmoothing.ribbons} is always a ribbon and
this is
false as remarked in Observation~\ref{nonCMremark}.
The mistake in the proof of \cite[Lemma
3.2]{GPcarpets} is that, when we tensor the exact sequence
\begin{equation*}
0 \longrightarrow \mathcal O_{\mathcal Y}  \overset{\alpha}\longrightarrow
\Phi_*\mathcal O_{\mathcal C}  \longrightarrow \mathcal F  \longrightarrow 0
\end{equation*}
with $\mathcal O_{\mathbf P^n_T}/\mathcal I(H \times T)$, the resulting sequence
does not necessarily remain exact on the left.

\bigskip

\noindent We point out now how to avoid using \cite[Lemma 3.2]{GPcarpets} when
proving \cite[Theorem 3.5]{GPcarpets}. The proof of \cite[Theorem
3.5]{GPcarpets} is based on \cite[Proposition 3.4]{GPcarpets}, which says that
the
flat limit at $t=0$ of a family of $K3$ surfaces $\mathcal Y_t$ embedded by a
very ample polarization $\zeta_t$  is a $K3$ carpet provided that $(\mathcal
Y_0, \zeta_0)$ is a hyperelliptic polarized $K3$ surface. \cite[Proposition
3.4]{GPcarpets} was proved using \cite[Theorem 2.1]{GPcarpets} and \cite[Theorem
2.1]{GPcarpets}
essentially says the following:

\smallskip

\noindent \emph{A double structure $D$  of
dimension
$m$, supported on $D_{red}$ is locally Cohen--Macaulay if and only if through
every
closed point of $D$ there exists locally a Cartier divisor $h$ that cuts out on
$D$ a
locally Cohen--Macaulay double structure of dimension $m-1$, supported on the
restriction of $h$ to $D_{red}$.}

\smallskip
\noindent  It was in the process of applying
\cite[Theorem 2.1]{GPcarpets} to prove  \cite[Proposition
3.4]{GPcarpets} that we used \cite[Lemma 3.2]{GPcarpets}. Thus we outline now a
different
argument avoiding the use of \cite[Lemma 3.2]{GPcarpets}.
If the flat family $(\mathcal C,\zeta)$ over $T$ and the relative hyperplane
section $H \times T$ of \cite[Proposition 3.4]{GPcarpets} are suitably chosen,
then \cite[Theorem 2.1]{GPcarpets} can be applied to $\mathcal Y_0$ in the same
way as in the proof of \cite[Proposition 3.4]{GPcarpets}. Precisely, let $\tilde
\varphi$ be the restriction of $\Phi_\zeta$ to $H \times \Delta$. If one is able
to choose $(\mathcal C,\zeta)$ and the relative hyperplane section $H \times T$
so that $\tilde \varphi$ corresponds to a surjective homomorphism in
Hom$(\mathcal I/\mathcal I^2, \mathcal E)$, then  $\mathcal Y_0 \cap H$ will be
a canonical ribbon. Then \cite[Proposition 3.4]{GPcarpets} would follow from
\cite[Theorem 2.1]{GPcarpets}.

\end{document}